\newtheorem{assumption}{Assumption}
\begin{document}

\title{Accelerated linearized alternating direction method of multipliers with Nesterov extrapolation \thanks{This work was supported by  the National Natural Science  Foundation of China (12171339), and Talent Introduction Project of Xihua University (Grant No. Z241102).}}

\author{Xin He  \and
        Nan-Jing Huang 	 \and
        Ya-Ping Fang
}

%\authorrunning{Short form of author list} % if too long for running head

\institute{Xin He \at
             School of Science, Xihua University, Chengdu, Sichuan,  P.R. China \\
              \email{hexinuser@163.com}           %  \\
%             \emph{Present address:} of F. Author  %  if needed
           \and
           \  Nan-Jing Huang \at
             Department of Mathematics, Sichuan University, Chengdu, Sichuan, P.R. China\\
           \email{njhuang@scu.edu.cn}
           \and
           Ya-Ping Fang \Letter  \at
           Department of Mathematics, Sichuan University, Chengdu, Sichuan, P.R. China \\
              \email{ypfang@scu.edu.cn}
}

\date{Received: date / Accepted: date}
% The correct dates will be entered by the editor

\maketitle

\begin{abstract}
The alternating direction method of multipliers (ADMM) has found widespread use in  solving separable convex optimization problems. In this paper, by employing Nesterov extrapolation technique, we propose two families of accelerated linearized ADMMs for addressing two-block linearly constrained separable convex optimization problems where each block of the objective function exhibits a ``nonsmooth'' + ``smooth'' composite structure. Our proposed accelerated linearized ADMMs extend two classical Nesterov acceleration methods designed for unconstrained composite optimization problems to linearly constrained problems. These methods are capable of achieving non-ergodic convergence rates of $\mathcal{O}(1/k^2)$, provided that one block of the objective function exhibits strong convexity and the gradients of smooth terms are Lipschitz continuous. We show that the proposed methods can reduce  to  accelerated linearized augmented Lagrangian methods (ALMs) for solving one-block linearly constrained  convex optimization problems. By choosing different extrapolation parameters, we explore the relationship between the proposed methods and some existing accelerated methods. Numerical results are presented to validate the efficacy and reliability of the proposed algorithms.

\keywords{ Separable convex optimization\and  Accelerated linearized ADMM\and  Nesterov extrapolation\and Non-ergodic  $\mathcal{O}(1/k^2)$ rate.}
% \PACS{PACS code1 \and PACS code2 \and more}
 \subclass{90C25 \and 90C30 \and 65K05 \and  65K10\and  49M27	}
\end{abstract}

\section{Introduction}
\subsection{Problem formulation and motivation}

In this paper, we address the  separable convex optimization problem with a composite structure
	\begin{equation}\label{ques}
				\min_{x\in  \mathbb{R}^{m},y\in  \mathbb{R}^{n}} \quad f(x)+g(y) \qquad s.t.  \  Ax+By=b,
	\end{equation}
where  $A\in\mathbb{R}^{p\times m}$, $B\in\mathbb{R}^{p\times n}$, $b\in\mathbb{R}^{p}$, $f(x)=f_1(x)+f_2(x)$, and $g(y)=g_1(y)+g_2(y)$. Here $f_1:\mathbb{R}^{m}\to\mathbb{R}\cup\{+\infty\}$ and $g_1:\mathbb{R}^{n}\to\mathbb{R}\cup\{+\infty\}$ are proper, closed and convex functions that may be nonsmooth, while $f_2:\mathbb{R}^{m}\to\mathbb{R}$ and $g_2:\mathbb{R}^{n}\to\mathbb{R}$ are smooth and convex.  It's worth noting that $f_2$ and $g_2$ can vanish in problem \eqref{ques}.  Problem \eqref{ques} finds applications in various fields, including  machine learning, image processing, computer vision, and distributed optimization (see \cite{LiML2019ACC,LiML2019ADMM,Boyd2011,Goldstein2014}).
	
Due to the demands of big data applications, first-order methods have gained significant popularity in recent years. However, naive gradient methods often exhibit slow convergence. Therefore, considerable effort has been devoted to accelerating various first-order methods. The origins of accelerated gradient methods date back to the 1980s when Nesterov \cite{Nesterov1983} introduced the  accelerated gradient method
\begin{equation}\label{al_Nest1983}
	\begin{cases}
		 y_{k} = x_{k}+\frac{t_k-1}{t_{k+1}}(x_{k}-x_{k-1}), \\
		x_{k+1} = y_k-s\nabla f(y_k),
	\end{cases}
\end{equation}
which was designed for solving the smooth convex programming $\min_x f(x)$,
where  $f:\mathbb{R}^{m}\to\mathbb{R}$ is a smooth convex function,  and the update scheme for  $\{t_k\}_{k\geq 1}$ is defined as:
\begin{eqnarray}\label{tk}
	t_{k+1}=\frac{1+\sqrt{1+4t_k^2}}{2}.
\end{eqnarray}
The  accelerated gradient method \eqref{al_Nest1983} exhibits a convergence rate of $f(x_k)- \min f = \mathcal{O}(1/k^2)$ when $f$ has an $L_f$-Lipschitz continuous gradient and $0<s\leq 1/L_{f}$.  Since 2008, as problems of increasing scale have emerged across various fields, researchers have extended the acceleration concept of the gradient method \eqref{al_Nest1983} to the composite convex optimization problem of the form 
\begin{equation}\label{tbl-f1}
\min_x f(x)+g(x),
\end{equation} 
where $f:\mathbb{R}^{m}\to\mathbb{R}$ is a smooth convex function and  $g:\mathbb{R}^{m}\to\mathbb{R}\cup\{+\infty\}$ is a proper, closed and convex function which may be nonsmooth. There are two well-known Nesterov's accelerated schemes for solving problem \eqref{tbl-f1}.  The  Nesterov's first scheme  
is presented as follows:
\begin{equation}\label{FISTA}
	\begin{cases}
		 y_{k} = x_{k}+\frac{t_k-1}{t_{k+1}}(x_{k}-x_{k-1}),\\
		x_{k+1} =\text{\bf Prox}_{s,g}(y_k-s\nabla f(x_k)).
	\end{cases}
\end{equation}
Here, $\text{\bf Prox}_{s,g}$ is the proximal operator for the function  $g$, defined as 
\[\text{\bf Prox}_{s,g}(y) = \mathop{\arg\min}_x g(x)+\frac{1}{2s}\|x- y\|^2.\]
When $t_k$ is updated under the rule \eqref{tk},  the  Nesterov's first scheme \eqref{FISTA} is known as the  fast iterative shrinkage-thresholding algorithm (FISTA) \cite{BeckIma}.
The  Nesterov's second scheme (refer to  \cite{Tseng2008} and \cite[Algorithm 2.3]{LiML2019ACC}) is presented as follows\footnote{In order to maintain the consistency of the extrapolation terms, we reorganize the algorithm.}:
\begin{equation}\label{NestSS}
	\begin{cases}
		 y_{k} = x_{k}+\frac{t_k-1}{t_{k+1}}(x_{k}-x_{k-1}),\\
		u_{k+1} = \mathop{\arg\min}_u g(u)+\langle \nabla f(y_k),u\rangle +\frac{1}{2st_{k+1}}\|u- u_k\|^2,\\
		x_{k+1} = \frac{t_{k+1}-1}{t_{k+1}}x_k+	\frac{1}{t_{k+1}}u_{k+1}	.
	\end{cases}
\end{equation}
When $g(x)=0$,  the  Nesterov's second scheme \eqref{NestSS}  is equivalent to the  Nesterov's first scheme \eqref{FISTA}. Both of these two Nesterov's accelerated schemes achieve the convergence rate of $(f+g)(x_k)- \min (f+g) = \mathcal{O}(1/k^2)$. Since then, numerous accelerated methods with different choices of $t_{k}$ have been developed for solving unconstrained problems (for references, see \cite{Nesterov2018,Attouch2018MP,Nesterov2013,Su2014,Attouch2018,LuoMP}).

Meanwhile, by incorporating Nesterov's extrapolation technique  \cite{Nesterov1983,Nesterov2018}, researches have made extensive efforts  to develop acclerated  ALMs for solving the one-block linearly constrained convex optimization problem
\begin{equation}\label{ques_one}
\min_{x\in \mathbb{R}^{m}} \quad f(x) \quad \text{s.t.} \ Ax=b,
\end{equation}
which is the case of problem \eqref{ques} with $B=0$ and $g(y)=0$. By introducing Nesterov's extrapolation to the dual sequence, several accelerated ALMs with $\mathcal{O}(1/k^2)$ convergence rates of the dual residual have been investigated in \cite{Huang2013,Kang2013,Kang2015,HeY2010}  under different assumptions. Xu \cite{Xu2017} generalized Nesterov's second scheme \eqref{NestSS} to a primal-dual framework, demonstrating $\mathcal{O}(1/k^2)$ non-ergodic convergence rates for both the objective residual and the feasibility violation by setting $t_{k}=\frac{k}{2}$. He et al. \cite{HeNA} introduced inertial accelerated primal-dual methods inspired by the second-order primal-dual dynamical system in \cite{HeSiam}. They applied Nesterov's extrapolation to both primal and dual variables, extending  Nesterov's first scheme \eqref{FISTA} with $t_{k} = 1+\frac{k-2}{\alpha-1}$ and established $\mathcal{O}(1/k^2)$ non-ergodic convergence rates for the objective residual and the feasibility violation. Additionally, Tran-Dinh et al. \cite{Yurtsever2015,Tran2018,Patrascu2017} developed an accelerated primal-dual framework that achieves non-ergodic convergence rates. He et al. \cite{HeHess,HeAuto}, Luo \cite{LuoEsiam,Luo2021Accer}, and Bo{\c t} et al. \cite{BotMP2022} proposed different accelerated primal-dual methods with fast convergence rates through time discretization of primal-dual dynamical systems. For more accelerated ALMs for problem \eqref{ques_one},  we refer the reader to \cite{Sabach2022,Zhang2022,Xu2021MP,Tao2017,Lin2023,Zhu2023}.

For the two-block separable problem \eqref{ques}, ADMM \cite{Gabay1976} is a widely adopted method. It alternates between updating the primal variables $x$ and $y$, and subsequently updates the dual variable $\lambda$. The iteration scheme of ADMM can be described as follows:
\begin{equation*}\label{ADMM}
	\begin{cases}
		x_{k+1} =  \mathop{\arg\min}_x \mathcal{L}(x,y_k,\lambda_k)+\frac{\sigma}{2}\|Ax+By_{k}-b\|^2,\\
        y_{k+1} =  \mathop{\arg\min}_y \mathcal{L}(x_{k+1},y,\lambda_k)+\frac{\sigma}{2}\|Ax_{k+1}+By-b\|^2,\\
        \lambda_{k+1} =\lambda_k+\sigma(Ax_{k+1}+By_{k+1}-b).
	\end{cases}
\end{equation*}
Here  $\sigma>0$ and  $\mathcal{L}$ is the Lagrangian function of problem \eqref{ques}, defined as
 \begin{equation}\label{lagran}
 	\mathcal{L}(x,y,\lambda)= f(x)+g(y)+\langle \lambda,Ax+By-b\rangle.
 \end{equation}
There are several variants of ADMM, including those with proximal preconditioning \cite{Chen2022,Chen2015}, linearization \cite{Ouyang,Goldfarb}, inexactness \cite{Adona2020,Bai2022}, and symmetrization \cite{Hepr2014,Davis2017}. In the convex setting of functions $f$ and $g$,  ergodic $\mathcal{O}(1/k)$  convergence rates of ADMM and its variants  were derived in \cite{HeY2012Siam,Xu2017} and non-ergodic $\mathcal{O}(1/k)$  convergence rates were established in \cite{HeY2015NM,Li2019}.  
Incorporating Nesterov extrapolation to ADMMs, various accelerated ADMMs were developed in the literature. Naturally,  $\mathcal{O}(1/k^2)$ is a desired  convergence rate for an accelerated ADMM with Nesterov extrapolation.  But, it still remains open whether or not an ADMM-type method can acchieve the $\mathcal{O}(1/k^2)$  under a general setting where the objective functions are assumed to be only convex.  Some accelerated ADMMs  with  $\mathcal{O}(1/k^2)$ convergence rates have been investigated  under special assumptions on the models. Tian and Yuan \cite{Tian2019} proposed an accelerated ADMM for problem \eqref{ques} with $B= -Id$ and $b=0$, where $Id$ denotes the identity map,  and proved an ergodic $\mathcal{O}(1/k^2)$ convergence rate when $g$ has a Lipschitz continuous gradient and $A$ has a full column rank.  Recently, researchers have focused on developing accelerated ADMM under the semi-strong convexity assumption in the sense that one of the functions $f$ and $g$ is strongly convex and another is only convex. Xu \cite{Xu2017} proposed an accelerated linearized ADMM for solving problem \eqref{ques} with $g=g_1+g_2$. By adapting the parameters, they proved an ergodic $\mathcal{O}(1/k^2)$ convergence rate when $g$ is strongly convex and $g_2$ possesses a Lipschitz continuous gradient. Under the semi-strong convexity assumption,  Tran-Dinh et al. \cite{Tran2019,Tran2020} introduced new accelerated ADMM methods for problem \eqref{ques} with  non-ergodic $\mathcal{O}(1/k^2)$ convergence rates. Note that in the methods proposed in \cite{Tran2019,Tran2020}, three subproblems are solved in each iteration, one more than the classical ADMM. Zhang et al. \cite{Zhang2023} presented a unified prediction-correction framework to accelerate Lagrangian-based methods and established both ergodic $\mathcal{O}(1/k^2)$ and non-ergodic $\mathcal{O}(1/k^2)$ convergence rates for accelerated ADMM when the objective function is semi-strongly convex. The subproblem in \cite{Zhang2023} cannot involve the proximal calculation of strongly convex functions, and they do not address problem \eqref{ques} when $f$ and $g$ have a composite structure. Very recently, Luo and Zhang \cite{LuoSco} proposed a unified differential equation solver approach for problem \eqref{ques}. By utilizing different time discretizations of an inertial primal-dual dynamical system, they developed a class of accelerated ADMMs which exhibit non-ergodic optimal mixed-type convergence rates under both convex and semi-strongly convex assumptions. It is worth mentioning that multiple adaptive parameters of the algorithm considered in  \cite{LuoSco} must satisfy certain complex equality conditions to attain the non-ergodic convergence rate. These conditions play a crucial role in the proof and operation but introduce challenges in parameter selection during actual numerical calculations.

 \subsection{ Notations and assumption} 

Let $\langle \cdot, \cdot \rangle$ and $\|\cdot\|$ represent the inner product and the Euclidean norm, respectively. We denote 
$\Omega\in \mathbb{R}^{m}\times\mathbb{R}^{n}\times\mathbb{R}^{p}$ as 
the set of saddle points  for the Lagrangian function $\mathcal{L}$ defined in \eqref{lagran}. For any $(x^*,y^*,\lambda^*)\in\Omega$, we have
\begin{equation}\label{saddle}
	\mathcal{L}(x^*,y^*,\lambda) \leq \mathcal{L}(x^*,y^*,\lambda^*)\leq \mathcal{L}(x,y,\lambda^*), \quad \forall x,y,\lambda \in \mathbb{R}^{m}\times\mathbb{R}^{n}\times\mathbb{R}^{p},
\end{equation}
$(x^*,y^*)$ is a solution of problem \eqref{ques},  and
\[Ax^*+By^*-b=0.\]

For a  function  $F:\mathbb{R}^{n}\to\mathbb{R}\cup\{+\infty\}$, the domain of $F$ is defined as $dom(F)=\{x\in\mathbb{R}^n | F(x)<+\infty\}$. We say that  $F$ is proper if $dom(F) \neq \emptyset$, and that  $F$ is closed if $F(x) \leq \liminf_{y\to x}F(y)$ and $dom(F)$ is closed. For a proper, closed and convex function $F:\mathbb{R}^{n}\to\mathbb{R}$,  the subdifferential $\partial F$ of $F$ is defined by
 \[\partial F(x) = \{\omega \in\mathbb{R}^n | F(y)\geq F(x)+\langle \omega,y-x \rangle,\quad \forall y\in\mathbb{R}^n\}.\]
We denote $\widetilde{\nabla} F(x)\in\partial F(x)$ as a subgradient of $F$ at $x$. When $F$ is smooth, $\widetilde{\nabla} F(x)=\nabla F(x)$ is the gradient of $F$ at $x$.  

   To establish fast convergence rates for the proposed  methods, our assumption regarding the model \eqref{ques} can be summarized as follows.

\begin{assumption}\label{ass}
	 Suppose that $f(x)=f_1(x)+f_2(x)$ and $g(y)=g_1(y)+g_2(y)$, where $f_1$ is a proper, closed and convex function, $f_2$ is convex and  has a Lipschitz continuous gradient with constant $L_{f_2}$;  $g_1$ is a proper, closed and $\mu_g$-strongly convex function, $g_2$ is convex and has a Lipschitz continuous gradient with constant $L_{g_2}$; the saddle points set $\Omega$ is nonempty.
\end{assumption}
With the above assumption, we have 
\begin{eqnarray}\label{strongC}
	g_1(y_2) - g_1(y_1)-\langle  \widetilde{\nabla} g_1(y_1), y_2-y_1\rangle\geq \frac{\mu_g}{2}\|y_2-y_1\|^2 
\end{eqnarray}
for any $y_1,y_2 \in\mathbb{R}^n, \widetilde{\nabla} g_1(y_1)\in\partial g_1(y_1)$,
and
\begin{equation*}
f_2(x_2) -f_2(x_1)-\langle \nabla f_2(x_1),x_2-x_1\rangle \leq \frac{L_{f_2}}{2}\|x_2-x_1\|^2, \quad \forall x_1,x_2\in\mathbb{R}^m.
\end{equation*}
This together with the convexity of $f_2$ implies
\begin{eqnarray}\label{lip_f2}
	\langle\nabla f_2(\bar{x}), x_2-x_1\rangle &=& \langle\nabla f_2(\bar{x}), x_2-\bar{x}\rangle +\langle\nabla f_2(\bar{x}), \bar{x}-x_1\rangle\nonumber\\
	&\geq & f_2(x_2)-f_2(\bar{x})-\frac{L_{f_2}}{2}\|x_2-\bar{x}\|^2+f_2(\bar{x})-f_2(x_1)\\
	&=& f_2(x_2)-f_2(x_1)-\frac{L_{f_2}}{2}\|x_2-\bar{x}\|^2\nonumber
\end{eqnarray}
for any $x_1,x_2,\bar{x}$. Similarly,  we have
\begin{equation}
	\langle\nabla g_2(\bar{y}), y_2-y_1\rangle \geq g_2(y_2)-g_2(y_1)-\frac{L_{g_2}}{2}\|y_2-\bar{y}\|^2
\end{equation}
holds  for any $y_1,y_2,\bar{y}$.

\subsection{ Main contributions} 

We summarize our main contributions below.

$\bullet$ We propose two families of accelerated linearized ADMMs for solving problem \eqref{ques} with a composite structure. These two families of acceleration methods can be seen as generalizations of Nesterov's first scheme  \eqref{FISTA} and Nesterov's second scheme \eqref{NestSS}, respectively. Some of the proposed accelerated linearized ADMMs allow for the linearization of the augmented term for updating variable $y$, which involves only the proximal calculation of $g_1$. Under Assumption \ref{ass}, we demonstrate that the Lagrangian residual, the objective residual, and the feasibility violation of all the proposed methods enjoy non-ergodic $\mathcal{O}(1/t_k^2)$ convergence rates when $t_{k+1}^2\leq t_k^2 + t_{k+1}$, and can achieve a decay rate of $\mathcal{O}(1/k^2)$ at most. 

 $\bullet$ We demonstrate that the proposed methods can be reduced to accelerated linearized ALMs for solving the one-block linearly constrained problem \eqref{ques_one} with a composite structure. Under the assumption of convexity for objective function, some methods can attain a non-ergodic decay rate of $\mathcal{O}(1/k^2)$. Under the assumption of strong convexity for $f$, some methods can achieve a non-ergodic decay rate of $\mathcal{O}(1/k^2)$ with the proximal calculation of subproblems, allowing for the linearization of the augmented term. In addition, the accelerated  linearized ALMs can be reduced to Nesterov's accelerated schemes \eqref{FISTA} and \eqref{NestSS}  for solving unconstrained optimization problems.

$\bullet$ We discuss the relationship between our accelerated linearized ADMM and some existing acceleration algorithms, including accelerated linearized ALMs in \cite{Xu2017,HeNA}, accelerated ALMs in \cite{BotMP2022,Sabach2022}, and accelerated forward-backward methods in \cite{BeckIma,Tseng2008,Attouch2018,Chambolle}. We also conduct a series of numerical experiments to validate the effectiveness of the proposed algorithms.

\subsection{Outlines}

The structure of this paper is as follows. In Section 2, we propose a family of accelerated linearized ADMMs for solving \eqref{ques} by extending Nesterov's first scheme \eqref{FISTA} to the ADMM framework  and discuss  convergence rate analysis of the proposed methods. Section 3  is devoted to the study on  accelerated linearized ADMMs based on  Nesterov's second scheme for solving \eqref{ques}.  In Section 4, we discuss relationship between the proposed algorithms and  some existing accelerated methods. The numerical results are presented in Section 5, and we provide concluding remarks in Section 6.

\section{Accelerated Linearized ADMM Based on Nesterov's First Scheme}\label{sec_3}
\subsection{Formulation of algorithm}

In this part, we attempt to extend Nesterov's first scheme \eqref{FISTA}  to the ADMM framework. By applying the optimality condition and introducing an auxiliary variable $u_{k+1}=x_{k+1}+(t_{k+1}-1)(x_{k+1}-x_k)$, algorithm \eqref{FISTA} can be recast as the following equivalent scheme:
\begin{numcases}{}\label{eq_fista}
        {y}_k = x_k + \frac{t_k-1}{t_{k+1}}(x_k-x_{k-1}),\nonumber \\
		u_{k+1}-u_k = -{s t_{k+1}}(\widetilde{\nabla} g(\boldsymbol{x_{k+1}})+\nabla f(y_k)),\label{dis_Nest2} \\
		x_{k+1} = \frac{1}{t_{k+1}}u_{k+1}+ \frac{t_{k+1}-1}{t_{k+1}}x_{k}.\nonumber
 \end{numcases}

Building on this, we propose the following scheme for problem \eqref{ques}:
\begin{subequations}\label{disB}
\begin{numcases}{}
        (\bar{x}_k, \bar{y}_k) = (x_k,y_k) + \frac{t_k-1}{t_{k+1}}[(x_k,y_k)-(x_{k-1},y_{k-1})],\label{disB1}\\
		u_{k+1}-u_k = -\boldsymbol{\alpha t_{k+1}}(\widetilde{\nabla} f_1(\boldsymbol{x_{k+1}})+\nabla f_2(\bar{x}_k)+A^T\bar{\lambda}_{k+1}),\label{disB2}\\
		v_{k+1}-v_k = -\boldsymbol{\frac{\beta}{t_{k+1}}}(\widetilde{\nabla} g_1(\boldsymbol{y_{k+1}})+\nabla g_2(\bar{y}_k) +B^T\hat{\lambda}_{k+1}+\boldsymbol{\mu_g(v_{k+1}-y_{k+1})}),\label{disB3}\\
		(x_{k+1},y_{k+1}) = \frac{1}{t_{k+1}}(u_{k+1},v_{k+1})+ \frac{t_{k+1}-1}{t_{k+1}}(x_{k},y_k),\label{disB4}\\
		\lambda_{k+1} = \lambda_k + \gamma t_{k+1}(Au_{k+1}+Bv_{k+1}-b),\label{disB5}
 \end{numcases}
\end{subequations}
where $x_1=x_0\in\mathbb{R}^m, y_1=y_0\in\mathbb{R}^n$, $u_1 = x_1$, $v_1=y_1$, $\alpha>0$, $\beta>0$, $\gamma>0$, and $t_{k}\geq 1$ is a nondecreasing sequence.

It can be observed from \eqref{disB2} and  \eqref{disB4} that the update of the primal variable $x_k$ is same as  \eqref{eq_fista}. Due to the alternating update between the primal variables $x$ and $y$, we choose the scaling $\frac{\beta}{t_{k+1}}$ in \eqref{disB3} instead of ${\beta}{t_{k+1}}$. 
Remark that choosing the scaling  as ${\beta t_{k+1}}$, would require setting $\bar{\lambda}_{k+1}=\hat{\lambda}_{k+1}=\lambda_{k+1}$ to maintain fast convergence. However, this would make  $u_{k+1}$ and $v_{k+1}$  coupled with each other, and cannot  yield a useful algorithm. In this scenario, the term $\mu_g(v_{k+1}-y_{k+1})$ is introduced  into \eqref{disB3}, which is motivated by the accelerated first-order methods considered  in \cite{LuoMP}, and it plays a critical element in our convergence analysis. We  select $\bar{\lambda}_{k+1}$  and  $\hat{\lambda}_{k+1}$ as follows:
\begin{eqnarray*}
	 \bar{\lambda}_{k+1} = \lambda_k +\gamma t_{k+1}(Au_{k+1}+Bv_{k}-b);\qquad \hat{\lambda}_{k+1} = {\lambda}_{k+1} \text{ or } \bar{\lambda}_{k+1} .
\end{eqnarray*}
By applying the optimality condition and noting that
\[ u_{k+1}=t_{k+1}x_{k+1}-(t_{k+1}-1)x_k,\qquad u_{k+1}-u_k=t_{k+1}(x_{k+1}-\bar{x}_k),\]
we can equivalently represent equation \eqref{disB2} as:
\begin{eqnarray}\label{updata_uB1}
&&	x_{k+1} = \mathop{\arg\min}_x\left(f_1(x)+\langle A^T\lambda_k+\nabla f_2(\bar{x}_k),x\rangle+\frac{\gamma t^2_{k+1}}{2}\|A(x-x_k)+\right.\\
 &&    \left.\qquad\qquad\qquad \qquad\frac{1}{t_{k+1}}(Ax_k+Bv_k-b)\|^2+\frac{1}{2\alpha}\left\|x-\bar{x}_k\right\|^2\right).\nonumber
\end{eqnarray}
Similarly, when $\hat{\lambda}_{k+1} ={\lambda}_{k+1}$,  equation \eqref{disB3} is equivalent to
\begin{eqnarray}\label{updata_vB1}
	y_{k+1}& =& \mathop{\arg\min}_y\left(g_1(y)+\langle B^T\lambda_k+\nabla g_2(\bar{y}_k),y\rangle\right.\nonumber\\
	&&\left.+\frac{1}{2\eta_k}\left\|y-\bar{y}_k+\eta_k\mu_g(t_{k+1}-1)(\bar{y}_k-y_k)\right\|^2\right.\\
   &&\left.+\frac{\gamma t^2_{k+1}}{2}\|B(y-y_k)+\frac{1}{t_{k+1}}(Au_{k+1}+By_k-b)\|^2\right), \nonumber
\end{eqnarray}
where $\eta_k = \frac{\beta}{t_{k+1}^2+\beta\mu_g(t_{k+1}-1)}$. In this case, we can rewrite scheme \eqref{disB} as Algorithm \ref{al_2A}.

When $\hat{\lambda}_{k+1} =\bar{\lambda}_{k+1}$,  equation \eqref{disB3} is equivalent to
\begin{equation}\label{updata_vB2}
     y_{k+1} ={\bf Prox}_{{\eta_k},g_1}\left(\bar{y}_k -\eta_k(\mu_g(t_{k+1}-1)(\bar{y}_k-y_k)+B^T\bar{\lambda}_{k+1}+\nabla g_2(\bar{y}_k))\right)
\end{equation}
with $\eta_k = \frac{\beta}{t_{k+1}^2+\beta\mu_g(t_{k+1}-1)}$.
In this case, we can rewrite  scheme \eqref{disB} as Algorithm \ref{al_2B}.

\begin{algorithm}
        \caption{Accelerated Linearized ADMM-Nesterov's First Scheme (I)}
        \label{al_2A}
        {\bf Initialize:} Let $x_1=x_0=u_1\in dom(f), y_1=y_0=v_1\in dom(g), \lambda_1\in\mathbb{R}^p$, $\alpha>0,\beta>0,\gamma>0$, $t_1\geq 1$. \\
        \For{$k = 1, 2,\cdots$}{
        $(\bar{x}_k, \bar{y}_k) = (x_k,y_k) + \frac{t_k-1}{t_{k+1}}[(x_k,y_k)-(x_{k-1},y_{k-1})]$.\\
       $x_{k+1} = \mathop{\arg\min}_x\left(f_1(x)+\langle A^T\lambda_k+\nabla f_2(\bar{x}_k),x\rangle+\frac{\gamma t^2_{k+1}}{2}\|A(x-x_k)+\frac{1}{t_{k+1}}(Ax_k+Bv_k-b)\|^2\right.$\\
     $\left.\qquad\qquad\qquad \qquad+\frac{1}{2\alpha}\left\|x-\bar{x}_k\right\|^2\right)$.\\
      $u_{k+1}=x_{k+1}+(t_{k+1}-1)(x_{k+1}-x_k)$.\\
      Let $\eta_k = \frac{\beta}{t_{k+1}^2+\beta\mu_g(t_{k+1}-1)}$.\\
      $y_{k+1} = \mathop{\arg\min}_y\left(g_1(y)+\langle B^T\lambda_k+\nabla g_2(\bar{y}_k),y\rangle+\frac{1}{2\eta_k}\left\|y-\bar{y}_k+\eta_k\mu_g(t_{k+1}-1)(\bar{y}_k-y_k)\right\|^2\right.$\\
  $ \left.\qquad\qquad\qquad \qquad +\frac{\gamma t^2_{k+1}}{2}\|B(y-y_k)+\frac{1}{t_{k+1}}(Au_{k+1}+By_k-b)\|^2\right) $.\\
          $v_{k+1}=y_{k+1}+(t_{k+1}-1)(y_{k+1}-y_k)$.\\
		$\lambda_{k+1} = \lambda_k + \gamma t_{k+1}(Au_{k+1}+Bv_{k+1}-b)$.
}
\end{algorithm}

\begin{algorithm}
        \caption{Accelerated Linearized ADMM-Nesterov's First Scheme (II)}
        \label{al_2B}
         {\bf Initialize:} Let $x_1=x_0=u_1\in dom(f), y_1=y_0=v_1\in dom(g), \lambda_1\in\mathbb{R}^p$, $\alpha>0,\beta>0,\gamma>0$, $t_1\geq 1$. \\
        \For{$k = 1, 2,\cdots$}{
        $(\bar{x}_k, \bar{y}_k) = (x_k,y_k) + \frac{t_k-1}{t_{k+1}}[(x_k,y_k)-(x_{k-1},y_{k-1})]$.\\
        $x_{k+1} = \mathop{\arg\min}_x\left(f_1(x)+\langle A^T\lambda_k+\nabla f_2(\bar{x}_k),x\rangle+\frac{\gamma t^2_{k+1}}{2}\|A(x-x_k)+\frac{1}{t_{k+1}}(Ax_k+Bv_k-b)\|^2\right.$\\
     $\left.\qquad\qquad\qquad \qquad+\frac{1}{2\alpha}\left\|x-\bar{x}_k\right\|^2\right)$.\\
      $u_{k+1}=x_{k+1}+(t_{k+1}-1)(x_{k+1}-x_k)$.\\
      $\bar{\lambda}_{k+1} =\lambda_k+ \gamma t_{k+1}(Au_{k+1}+Bv_{k}-b)$,\\
       Let $\eta_k = \frac{\beta}{t_{k+1}^2+\beta\mu_g(t_{k+1}-1)}$.\\
      $y_{k+1} ={\bf Prox}_{{\eta_k},g_1}\left(\bar{y}_k -\eta_k(\mu_g(t_{k+1}-1)(\bar{y}_k-y_k)+B^T\bar{\lambda}_{k+1}+\nabla g_2(\bar{y}_k))\right)$.\\
      $v_{k+1}=y_{k+1}+(t_{k+1}-1)(y_{k+1}-y_k)$.\\
		$\lambda_{k+1} = \lambda_k + \gamma t_{k+1}(Au_{k+1}+Bv_{k+1}-b)$.
}
\end{algorithm}

\subsection{A single-step analysis}
Below, we present a single-iteration analysis for the sequence that satisfies \eqref{disB}. The results will be employed in the convergence analysis of Algorithm \ref{al_2A} and Algorithm \ref{al_2B}. To begin, we construct the energy sequence $\{\mathcal{E}_k\}_{k\geq 1}$ as:
\begin{eqnarray*}
	\mathcal{E}_k =I_k^1 +I_k^2+I_k^3+I_k^4
\end{eqnarray*}
with 
\begin{numcases}{}
	 I_k^1 = t^2_k(\mathcal{L}(x_k,y_k,\lambda^*)-\mathcal{L}(x^*,y^*,\lambda^*)),\nonumber \\
	 I_k^2 = \frac{1}{2\alpha}\|u_k-x^*\|^2,\nonumber \\
	 I_k^3 = \frac{t^2_{k+1}}{2\beta}\|v_k-y^*\|^2,\label{energy_sub}  \\
	 I_k^4 = \frac{1}{2\gamma}\|\lambda_k-\lambda^*\|^2. \nonumber 
\end{numcases}
It follows from \eqref{saddle} that $\mathcal{E}_k\geq 0$ for all $k\geq 1$. Before we proceed to estimate the energy sequence $\{\mathcal{E}_k\}_{k\geq 1}$, we recall the following equality:
\begin{equation}\label{eq_know}
	 \frac{1}{2}\|x\|^2-\frac{1}{2}\|y\|^2=\langle x,x-y\rangle-\frac{1}{2}\|x-y\|^2
\end{equation}
for any $x,y,z \in\mathbb{R}^n$, which  will be used repeatedly in our analysis.

\begin{lemma}\label{le_B1}
Suppose that Assumption \ref{ass} holds. Let $\{(x_k,y_k,\lambda_k,u_k,v_k,\bar{\lambda}_k, \hat{\lambda}_k)\}_{k\geq 1}$ be  the sequence satisfying \eqref{disB}, with  $\alpha,\beta,\gamma>0$ and $\{t_k\}_{k\geq 1}\subseteq [1,+\infty)$ such that
\[ t_{k+1}^2\leq t^2_k + t_{k+1},\qquad k\geq 1.\]
Define the energy sequence $\mathcal{E}_k$ as in \eqref{energy_sub}.  Then  for any $(x^*,y^*,\lambda^*)\in\Omega$,  the following conclusions hold:
\begin{eqnarray*}
	\mathcal{E}_{k+1}-\mathcal{E}_k &\leq& \frac{t^2_{k+2}-t^2_{k+1}-\beta\mu_g t_{k+1}}{2\beta}\|v_{k+1}-y^*\|^2-\frac{1-\alpha L_{f_2}}{2\alpha}\|u_{k+1}-u_k\|^2\\
	&&-\frac{t^2_{k+1}-\beta L_{g_2}}{2\beta}\|v_{k+1}-v_k\|^2 -\frac{\mu_gt_{k+1}^2(t_{k+1}-1)}{2}\|y_{k+1}-y_k\|^2\\
	&&+\frac{1}{2\gamma}\|\lambda_{k+1}-\bar{\lambda}_{k+1}\|^2+t_{k+1}\langle B(v_{k+1}-y^*),\bar{\lambda}_{k+1}- \hat{\lambda}_{k+1}\rangle.
\end{eqnarray*}
\end{lemma}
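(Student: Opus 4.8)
The plan is to mirror the four-term decomposition $\mathcal{E}_k = I_k^1+I_k^2+I_k^3+I_k^4$ of Lemma~\ref{le_A1} and estimate each increment $I_{k+1}^j-I_k^j$ separately, the only genuinely new work occurring in $I^3$ because of the correction $\mu_g(v_{k+1}-y_{k+1})$ in \eqref{disB3} and the fact that $\widetilde{\nabla}f_1$, $\widetilde{\nabla}g_1$ are now evaluated at $x_{k+1},y_{k+1}$ rather than at $u_{k+1},v_{k+1}$. First I would record the identities coming from \eqref{disB1} and \eqref{disB4}, namely $u_{k+1}-x^*=(x_{k+1}-x^*)+(t_{k+1}-1)(x_{k+1}-x_k)$ with $u_{k+1}-u_k=t_{k+1}(x_{k+1}-\bar x_k)$, and their $v$-analogues; in particular $v_{k+1}-y_{k+1}=(t_{k+1}-1)(y_{k+1}-y_k)$, which is exactly the quantity multiplying $\mu_g$.

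For $I^1$, unlike Lemma~\ref{le_A1} I would \emph{not} invoke Jensen on the averaging step; instead I would write $I_{k+1}^1-I_k^1=t_{k+1}^2 D_{k+1}-t_k^2 D_k$ with $D_k:=\mathcal{L}(x_k,y_k,\lambda^*)-\mathcal{L}(x^*,y^*,\lambda^*)\geq0$, discard $-(t_k^2-t_{k+1}(t_{k+1}-1))D_k\leq0$ using $t_{k+1}^2\leq t_k^2+t_{k+1}$, and expand $t_{k+1}D_{k+1}-(t_{k+1}-1)D_k$ componentwise. Decomposing $\mathcal{L}$ into $f_1,f_2,g_1,g_2$ and the coupling term and using $u_{k+1}=t_{k+1}x_{k+1}-(t_{k+1}-1)x_k$, $v_{k+1}=t_{k+1}y_{k+1}-(t_{k+1}-1)y_k$, $Ax^*+By^*-b=0$, this yields exactly $(f_1(x_{k+1})-f_1(x^*))+(t_{k+1}-1)(f_1(x_{k+1})-f_1(x_k))$, its $g_1,f_2,g_2$ analogues, and $\langle\lambda^*,Au_{k+1}+Bv_{k+1}-b\rangle$ — precisely the quantities that cancel against $I^2$ and $I^3$. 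For $I^2$ I would proceed as in \eqref{eq_le1_6}, bounding $\langle u_{k+1}-x^*,\widetilde{\nabla}f_1(x_{k+1})\rangle$ below by $(f_1(x_{k+1})-f_1(x^*))+(t_{k+1}-1)(f_1(x_{k+1})-f_1(x_k))$ via two applications of the subgradient inequality for $f_1$, handling $\nabla f_2(\bar x_k)$ verbatim through \eqref{lip_f2}/\eqref{eq_le1_5}, and splitting $\bar\lambda_{k+1}=\lambda^*+(\bar\lambda_{k+1}-\lambda^*)$ to isolate the coupling term (canceling $I^1$) and leave the cross term $-t_{k+1}\langle u_{k+1}-x^*,A^T(\bar\lambda_{k+1}-\lambda^*)\rangle$ and the residual $-\tfrac{1-\alpha L_{f_2}}{2\alpha}\|u_{k+1}-u_k\|^2$.

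The decisive step, which I expect to be the main obstacle, is $I^3$. After \eqref{eq_know} and \eqref{disB3}, the new ingredients are $-t_{k+1}\langle v_{k+1}-y^*,\widetilde{\nabla}g_1(y_{k+1})\rangle$ and $-t_{k+1}\mu_g\langle v_{k+1}-y^*,v_{k+1}-y_{k+1}\rangle$. Applying the strong convexity \eqref{strongC} of $g_1$ at $y_{k+1}$ against both $y^*$ and $y_k$, via $v_{k+1}-y^*=(y_{k+1}-y^*)+(t_{k+1}-1)(y_{k+1}-y_k)$, produces the objective terms matching $I^1$ together with the surplus negative contributions $-\tfrac{t_{k+1}\mu_g}{2}\|y_{k+1}-y^*\|^2-\tfrac{t_{k+1}(t_{k+1}-1)\mu_g}{2}\|y_{k+1}-y_k\|^2$. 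The role of the correction term is to absorb the misplaced $\|y_{k+1}-y^*\|^2$: expanding $\langle v_{k+1}-y^*,v_{k+1}-y_{k+1}\rangle=\tfrac12\|v_{k+1}-y^*\|^2-\tfrac12\|y_{k+1}-y^*\|^2+\tfrac12\|v_{k+1}-y_{k+1}\|^2$ via \eqref{eq_know} gives a $+\tfrac{t_{k+1}\mu_g}{2}\|y_{k+1}-y^*\|^2$ canceling the surplus, a $-\tfrac{t_{k+1}\mu_g}{2}\|v_{k+1}-y^*\|^2$ supplying the $-\beta\mu_g t_{k+1}$ in the coefficient of $\|v_{k+1}-y^*\|^2$, and a $-\tfrac{t_{k+1}\mu_g}{2}\|v_{k+1}-y_{k+1}\|^2$ which, combined with the remaining surplus through $v_{k+1}-y_{k+1}=(t_{k+1}-1)(y_{k+1}-y_k)$, collapses to exactly $-\tfrac{\mu_g t_{k+1}^2(t_{k+1}-1)}{2}\|y_{k+1}-y_k\|^2$. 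The $g_2$ contribution and the coefficient $-\tfrac{t_{k+1}^2-\beta L_{g_2}}{2\beta}\|v_{k+1}-v_k\|^2$ follow as in \eqref{eq_le1_8}.

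Finally, the $I^4$ increment is identical to \eqref{eq_le1_10}, and recombining the two dual cross terms with $\lambda_{k+1}-\lambda_k$ exactly as in \eqref{eq_le1_11}–\eqref{eq_le1_12} produces $\tfrac{1}{2\gamma}\|\lambda_{k+1}-\bar\lambda_{k+1}\|^2+t_{k+1}\langle B(v_{k+1}-y^*),\bar\lambda_{k+1}-\hat\lambda_{k+1}\rangle$ while absorbing $-\tfrac{1}{2\gamma}\|\lambda_{k+1}-\lambda_k\|^2$. Summing $I^1$ through $I^4$, with all objective and $\lambda^*$-coupling terms canceling, then gives the claimed inequality; the delicate part is solely the sign-and-coefficient bookkeeping of the $I^3$ cancellations described above.
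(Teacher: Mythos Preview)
Your proposal is correct and follows essentially the same route as the paper's proof: the paper also avoids the Jensen step for $I^1$ (writing directly $I_{k+1}^1-I_k^1\le t_{k+1}D_{k+1}+t_{k+1}(t_{k+1}-1)(D_{k+1}-D_k)$), handles $I^2$ via the subgradient inequality for $f_1$ at $x_{k+1}$ together with \eqref{eq_le1_5}, and treats $I^3$ exactly through the strong-convexity bound at $y_{k+1}$ plus the polarization identity for $\langle v_{k+1}-y^*,v_{k+1}-y_{k+1}\rangle$, with $I^4$ and the dual recombination reused verbatim from \eqref{eq_le1_10}--\eqref{eq_le1_12}. Your coefficient bookkeeping in the $I^3$ cancellation (the $\|y_{k+1}-y^*\|^2$ terms canceling and the two $\|y_{k+1}-y_k\|^2$ contributions collapsing to $-\tfrac{\mu_g t_{k+1}^2(t_{k+1}-1)}{2}\|y_{k+1}-y_k\|^2$) is exactly what the paper does.
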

\begin{proof}
Since  $t_{k+1}^2\leq t^2_k + t_{k+1}$, we have
\begin{eqnarray*}\label{eq_le2_1}
	I_{k+1}^1-I_k^1 &=& t^2_{k+1}(\mathcal{L}(x_{k+1},y_{k+1},\lambda^*)-\mathcal{L}(x^*,y^*,\lambda^*)) - t^2_k(\mathcal{L}(x_k,y_k,\lambda^*)-\mathcal{L}(x^*,y^*,\lambda^*))\nonumber\\
	&= &(t_{k+1}(t_{k+1}-1)-t^2_k)(\mathcal{L}(x_k,y_k,\lambda^*)-\mathcal{L}(x^*,y^*,\lambda^*))\nonumber\\
	&& + t^2_{k+1}(\mathcal{L}(x_{k+1},y_{k+1},\lambda^*)-\mathcal{L}(x^*,y^*,\lambda^*))\\
	&& + t_{k+1}(t_{k+1}-1)(\mathcal{L}(x_k,y_k,\lambda^*)- \mathcal{L}(x^*,y^*,\lambda^*))\nonumber\\
	&\leq & t_{k+1}(\mathcal{L}(x_{k+1},y_{k+1},\lambda^*)-\mathcal{L}(x^*,y^*,\lambda^*))\nonumber\\
	&& + t_{k+1}(t_{k+1}-1)(\mathcal{L}(x_{k+1},y_{k+1},\lambda^*)-\mathcal{L}(x_k,y_k,\lambda^*)).\nonumber
\end{eqnarray*}	
From  \eqref{disB4}, we can derive
\begin{equation*}\label{eq_le1_3}
	u_{k+1} = x_{k+1} +(t_{k+1}-1)(x_{k+1}-x_{k})
\end{equation*}
and 
\begin{equation*}\label{eq_le1_4}
	u_{k+1}-u_{k} = t_{k+1}(x_{k+1}-x_k)- (t_{k}-1)(x_{k}-x_{k-1}) =  t_{k+1}(x_{k+1}-\bar{x}_k).
\end{equation*}
By exploiting the convexity of $f_1$,  we have
\begin{eqnarray*}
&&	\langle u_{k+1}-x^*, \widetilde{\nabla} f_1({x}_{k+1})\rangle = \langle x_{k+1}-x^*,\widetilde{\nabla} f_1({x}_{k+1})\rangle+(t_{k+1}-1)\langle x_{k+1}-x_k, \widetilde{\nabla} f_1({x}_{k+1})\rangle\\
	&&\qquad\quad\geq  f_1(x_{k+1})-f_1(x^*)+(t_{k+1}-1)(f_1(x_{k+1})-f_1(x_k)).
\end{eqnarray*}
Since $f_2$ is convex and  has a  Lipschitz continuous gradient with constant $L_{f_2}$, it follows from \eqref{lip_f2} that
\begin{eqnarray*}\label{eq_le1_5}
&&	\langle u_{k+1}-x^*, {\nabla} f_2(\bar{x}_{k})\rangle = \langle x_{k+1}-x^*, {\nabla} f_2(\bar{x}_{k})\rangle+(t_{k+1}-1)\langle x_{k+1}-x_k, {\nabla} f_2(\bar{x}_{k})\rangle \nonumber\\
	&&\qquad\quad\geq  f_2(x_{k+1})-f_2(x^*)+(t_{k+1}-1)(f_2(x_{k+1})-f_2(x_k))-\frac{L_{f_2} t_{k+1}}{2}\|x_{k+1}-\bar{x}_k\|^2\\
	&&\qquad\quad=  f_2(x_{k+1})-f_2(x^*)+(t_{k+1}-1)(f_2(x_{k+1})-f_2(x_k))-\frac{L_{f_2}}{2t_{k+1}}\|u_{k+1}-u_k\|^2. \nonumber
\end{eqnarray*}
This, combined with  \eqref{disB2} and \eqref{eq_know} ,  leads to
\begin{eqnarray}\label{eq_le2_2}
	I_{k+1}^2-I_k^2 &=& \frac{1}{2\alpha}(\|u_{k+1}-x^*\|^2-\|u_{k}-x^*\|^2)\nonumber\\
	&=& \frac{1}{\alpha}\langle u_{k+1}-x^*, u_{k+1}-u_k\rangle-\frac{1}{2\alpha}\|u_{k+1}-u_k\|^2\nonumber\\
	&=& -t_{k+1} \langle u_{k+1}-x^*, \widetilde{\nabla} f_1(x_{k+1})+{\nabla} f_2(\bar{x}_{k})+A^T\lambda^*\rangle\nonumber\\
	&&-t_{k+1} \langle u_{k+1}-x^*,A^T(\bar{\lambda}_{k+1}-\lambda^*)\rangle-\frac{1}{2\alpha}\|u_{k+1}-u_k\|^2\nonumber\\
	&\leq & - t_{k+1}(f(x_{k+1})-f(x^*)+\langle \lambda^*,A(x_{k+1}-x^*)\rangle\\
	&& -t_{k+1}(t_{k+1}-1)(f(x_{k+1})-f(x_k)+\langle \lambda^*,A(x_{k+1}-x_k)\rangle)\nonumber\\
	&&-t_{k+1} \langle u_{k+1}-x^*,A^T(\bar{\lambda}_{k+1}-\lambda^*)\rangle-\frac{1-\alpha L_{f_2}}{2\alpha}\|u_{k+1}-u_k\|^2.\nonumber
\end{eqnarray}	
Similarly, we have
\begin{eqnarray*}
	\langle v_{k+1}-y^*,  \widetilde{\nabla} g_1({y}_{k+1})\rangle &\overset{\eqref{strongC}}{\geq}&  g_1(y_{k+1})-g_1(y^*)+(t_{k+1}-1)(g_1(y_{k+1})-g_1(y_k))\\
	&&  +\frac{\mu_g}{2}(\|y_{k+1}-y^*\|^2+(t_{k+1}-1)\|y_{k+1}-y_k\|^2).
\end{eqnarray*}
and 
\begin{eqnarray*}\label{eq_le1_7}
&&	\langle v_{k+1}-y^*, {\nabla} g_2(\bar{y}_{k})\rangle \\
	&&\qquad\quad\geq  g_2(y_{k+1})-g_2(y^*)+(t_{k+1}-1)(g_2(y_{k+1})-g_2(y_k))-\frac{L_{g_2}}{2t_{k+1}}\|v_{k+1}-v_k\|^2.\nonumber
\end{eqnarray*}
It follows from  \eqref{eq_know} and  $v_{k+1}-y_{k+1}=(t_{k+1}-1)(y_{k+1}-y_k)$ that
\begin{eqnarray*}
	\langle v_{k+1}-y^*, v_{k+1}-y_{k+1}\rangle = \frac{1}{2}(\|v_{k+1}-y^*\|^2+\| v_{k+1}-y_{k+1}\|^2-\|y_{k+1}-y^*\|^2)\\
	 = \frac{1}{2}(\|v_{k+1}-y^*\|^2+(t_{k+1}-1)^2\|y_{k+1}-y_k\|^2-\|y_{k+1}-y^*\|^2).
\end{eqnarray*}
Then,  by incorporating \eqref{disB3}, we can express
\begin{eqnarray}\label{eq_le2_3}
	&&I_{k+1}^3-I_k^3 = \frac{t^2_{k+2}}{2\beta}\|v_{k+1}-y^*\|^2-\frac{t^2_{k+1}}{2\beta}\|v_{k}-y^*\|^2\nonumber\\
	&&\qquad =\frac{t^2_{k+2}-t^2_{k+1}}{2\beta}\|v_{k+1}-y^*\|^2+ \frac{t^2_{k+1}}{\beta}(\langle v_{k+1}-y^*, v_{k+1}-v_k\rangle-\frac{1}{2}\|v_{k+1}-v_k\|^2)\nonumber\\
  &&\qquad	=\frac{t^2_{k+2}-t^2_{k+1}}{2\beta}\|v_{k+1}-y^*\|^2\nonumber \\
  &&\qquad\quad-t_{k+1} \langle v_{k+1}-y^*, \widetilde{\nabla} g_1(y_{k+1}) + {\nabla} g_2(\bar{y}_{k})+B^T{\lambda}^*+\mu_g(v_{k+1}-y_{k+1})\rangle\\
	&&\qquad\quad -t_{k+1} \langle v_{k+1}-y^*,B^T(\hat{\lambda}_{k+1}-\lambda^*)\rangle-\frac{t^2_{k+1}}{2\beta}\|v_{k+1}-v_k\|^2\nonumber\\
	&&\qquad\leq \frac{t^2_{k+2}-t^2_{k+1}-\beta\mu_g t_{k+1}}{2\beta}\|v_{k+1}-y^*\|^2-\frac{\mu_gt_{k+1}^2(t_{k+1}-1)}{2}\|y_{k+1}-y_k\|^2\nonumber\\
	&& \qquad \quad -t_{k+1}(g(y_{k+1})-g(y^*)+\langle \lambda^*,B(y_{k+1}-y^*)\rangle)\nonumber\\
	&& \qquad\quad-t_{k+1}(t_{k+1}-1)(g(y_{k+1})-g(y_k)+\langle \lambda^*,B(y_{k+1}-y_k)\rangle)\nonumber\\
	&& \qquad\quad -t_{k+1} \langle v_{k+1}-y^*,B^T(\hat{\lambda}_{k+1}-\lambda^*)\rangle-\frac{t^2_{k+1}-\beta L_{g_2}}{2\beta}\|v_{k+1}-v_k\|^2.\nonumber
\end{eqnarray}
Since $(x^*,y^*,\lambda^*)\in\Omega$ and $Ax^*+By^*-b=0$, it  follows from  \eqref{disB5} that
\begin{equation*}\label{eq_le1_9}
	\lambda_{k+1} - \lambda_k = \gamma t_{k+1}(Au_{k+1}+Bv_{k+1}-b) =\gamma t_{k+1}(A(u_{k+1}-x^*)+B(v_{k+1}-y^*)). 
\end{equation*}
This, combined with \eqref{eq_know}, results in
\begin{eqnarray}\label{eq_le1_10}
	I_{k+1}^4-I_k^4 &=& \frac{1}{2\gamma}(\|\lambda_{k+1}-\lambda^*\|^2-\|\lambda_{k}-\lambda^*\|^2)\nonumber\\
	&=& \frac{1}{\gamma}\langle \lambda_{k+1}-\lambda^*, \lambda_{k+1}-\lambda_k\rangle-\frac{1}{2\gamma}\|\lambda_{k+1}-\lambda_k\|^2\\
	&=& t_{k+1} \langle \lambda_{k+1}-\lambda^*, A(u_{k+1}-x^*)+B(v_{k+1}-y^*)\rangle-\frac{1}{2\gamma}\|\lambda_{k+1}-\lambda_k\|^2.\nonumber
\end{eqnarray}	
Adding $I_{k+1}^1-I_k^1$ to $I_{k+1}^4-I_k^4$, we obtain
\begin{eqnarray}\label{eq_le1_11}
	&&\mathcal{E}_{k+1}-\mathcal{E}_k = I_{k+1}^1-I_k^1+I_{k+1}^2-I_k^2+I_{k+1}^3-I_k^3+I_{k+1}^4-I_k^4\nonumber \\
	&&\leq \frac{t^2_{k+2}-t^2_{k+1}-\beta\mu_g t_{k+1}}{2\beta}\|v_{k+1}-y^*\|^2-\frac{1-\alpha L_{f_2}}{2\alpha}\|u_{k+1}-u_k\|^2\\
	&&\quad-\frac{t^2_{k+1}-\beta L_{g_2}}{2\beta}\|v_{k+1}-v_k\|^2-\frac{1}{2\gamma}\|\lambda_{k+1}-\lambda_k\|^2-\frac{\mu_g t_{k+1}^2(t_{k+1}-1)}{2}\|y_{k+1}-y_k\|^2\nonumber\\
	&&\quad+t_{k+1}(\langle A(u_{k+1}-x^*),\lambda_{k+1}-\bar{\lambda}_{k+1}\rangle+\langle B(v_{k+1}-y^*),\lambda_{k+1}- \hat{\lambda}_{k+1}\rangle). \nonumber
\end{eqnarray}
It follows from \eqref{disB5}  and $Ax^*+By^*-b=0$ that 
\begin{eqnarray}\label{eq_le1_12}
	&& t_{k+1}(\langle A(u_{k+1}-x^*),\lambda_{k+1}-\bar{\lambda}_{k+1}\rangle+\langle B(v_{k+1}-y^*),\lambda_{k+1}- \hat{\lambda}_{k+1}\rangle)\nonumber\\
	&& \qquad = \frac{1}{\gamma}\langle \lambda_{k+1}-\lambda_k, \lambda_{k+1}-\bar{\lambda}_{k+1}\rangle+t_{k+1}\langle B(v_{k+1}-y^*),\bar{\lambda}_{k+1}- \hat{\lambda}_{k+1}\rangle\\
	&& \qquad \leq \frac{1}{2\gamma}(\|\lambda_{k+1}-\lambda_k\|^2+\|\lambda_{k+1}-\bar{\lambda}_{k+1}\|^2)+t_{k+1}\langle B(v_{k+1}-y^*),\bar{\lambda}_{k+1}- \hat{\lambda}_{k+1}\rangle,\nonumber
\end{eqnarray}
which together \eqref{eq_le1_11}  implies the desired  result.
\end{proof}

\subsection{Convergence analysis}
With Lemma \ref{le_A1} in hand,  let's now delve into  the convergence properties of Algorithm \ref{al_2A} and Algorithm \ref{al_2B}.

%Given that Algorithm \ref{al_2A} corresponds to scheme \eqref{disB} with 	$\bar{\lambda}_{k+1} = \lambda_k+ \gamma t_{k+1}(Au_{k+1}+Bv_{k}-b)$ and $\hat{\lambda}_{k+1} =  {\lambda}_{k+1}$, while Algorithm \ref{al_2B} corresponds to scheme \eqref{disB} with  $\bar{\lambda}_{k+1}=\hat{\lambda}_{k+1} = \lambda_k+ \gamma t_{k+1}(Au_{k+1}+Bv_{k}-b)$, we can apply Lemma \ref{le_B1} and similar arguments as those in Theorem \ref{th_A1} and Theorem \ref{th_A2}. This leads to the following convergence results for Algorithm \ref{al_2A} and Algorithm \ref{al_2B}.

\begin{theorem}\label{th_B1}

Suppose that Assumption \ref{ass}  holds. Let $\{(x_k,y_k)\}_{k\geq 1}$ be the sequence generated by Algorithm \ref{al_2A} and  $(x^*,y^*,\lambda^*)\in\Omega$.  Assume that $\alpha\leq \frac{1}{L_{f_2}}$, $\beta\leq \frac{t_1^2}{L_{g_2}}$, and $\{t_k\}_{k\geq 1}\subseteq [1,+\infty)$ is a nondecreasing sequence satisfying
\begin{eqnarray*}
	t_{k+1}^2\leq t^2_k + t_{k+1}\quad\text{ and }\quad t^2_{k+1}\leq t^2_{k}+\frac{\beta\mu_g}{1+\beta\gamma\|B\|^2} t_{k}.
\end{eqnarray*}
Then the following conclusions hold:
\begin{eqnarray*}
&& \mathcal{L}(x_k,y_k,\lambda^*)-\mathcal{L}(x^*,y^*,\lambda^*)\leq \frac{\mathcal{E}_{1}+\frac{\gamma t^2_{2}}{2}\|B(y_{1}-y^*)\|^2}{t^2_k},\\
&&  \|Ax_{k}+By_{k}-b\|\leq  \frac{C_1}{t_{k}^2},\\
&& |f(x_k)+g(y_k)-f(x^*)-g(y^*)|\leq \frac{\mathcal{E}_{1}+\frac{\gamma t^2_{2}}{2}\|B(y_{1}-y^*)\|^2+C_1\|\lambda^*\|}{t_{k}^2},
\end{eqnarray*}
where 
\begin{equation}\label{E_1}
	\mathcal{E}_{1} = t^2_1(\mathcal{L}(x_1,y_1,\lambda^*)-\mathcal{L}(x^*,y^*,\lambda^*))+\frac{1}{2\alpha}\|x_1-x^*\|^2+\frac{t^2_{1}}{2\beta}\|y_1-y^*\|^2+\frac{1}{2\gamma}\|\lambda_1-\lambda^*\|^2
\end{equation}
 and 
 \begin{equation}\label{C_1}
 C_1= 3t^2_1\|Ax_1+By_1-b\|+\frac{2\|\lambda_1-\lambda^*\|+2\sqrt{2\gamma\mathcal{E}_{1}+{\gamma^2 t^2_{2}}\|B(y_{1}-y^*)\|^2}}{\gamma}. 	
 \end{equation}\end{theorem}
\begin{proof}
Since Algorithm \ref{al_2A} is equivalent to scheme \eqref{disB} with 	
	\[ \bar{\lambda}_{k+1} = \lambda_k+ \gamma t_{k+1}(Au_{k+1}+Bv_{k}-b), \quad\hat{\lambda}_{k+1} =  {\lambda}_{k+1},\]
we can establish the following relations
\[{\lambda}_{k+1} - \bar{\lambda}_{k+1} = \hat{{\lambda}}_{k+1}  - \bar{\lambda}_{k+1} = \gamma  t_{k+1}B(v_{k+1}-v_k).\]
This, combined with \eqref{eq_know}, leads to the following result
\begin{eqnarray*}
&&	\frac{1}{2\gamma}\|\lambda_{k+1}-\bar{\lambda}_{k+1}\|^2+t_{k+1}\langle B(v_{k+1}-y^*),\bar{\lambda}_{k+1}- \hat{\lambda}_{k+1}\rangle\\
	&&\qquad = \frac{\gamma t^2_{k+1}}{2}\|B(v_{k+1}-v_k)\|^2-\gamma t^2_{k+1}\langle B(v_{k+1}-y^*),B(v_{k+1}-v_k)\rangle\\
	&&\qquad=-\frac{\gamma t^2_{k+1}}{2}(\|B(v_{k+1}-y^*)\|^2 -\|B(v_{k}-y^*)\|^2)\\
	&&\qquad\leq -\left( \frac{\gamma t^2_{k+2}}{2}\|B(v_{k+1}-y^*)\|^2 -\frac{\gamma t^2_{k+1}}{2}\|B(v_{k}-y^*)\|^2\right)\\
	&&\qquad\quad+\frac{\gamma\|B\|^2 ( t^2_{k+2}-t^2_{k+1})}{2}\|v_{k+1}-y^*\|^2,
\end{eqnarray*}
where the last inequality is derived from the nondecreasing property of $t_k$. This, in conjunction with Lemma \ref{le_B1} and parameters setting, yields
\begin{eqnarray*}
	&& \mathcal{E}_{k+1} + \frac{\gamma t^2_{k+2}}{2}\|B(v_{k+1}-y^*)\|^2 - \left(\mathcal{E}_{k} + \frac{\gamma t^2_{k+1}}{2}\|B(v_{k}-y^*)\|^2\right)\\
	&&\qquad \leq \frac{(1+\beta\gamma\|B\|^2)(t^2_{k+2}-t^2_{k+1})-\beta\mu_g t_{k+1}}{2\beta}\|v_{k+1}-y^*\|^2\\
	&&\qquad\quad-\frac{1-\alpha L_{f_2}}{2\alpha}\|u_{k+1}-u_k\|^2-\frac{t^2_{k+1}-\beta L_{g_2}}{2\beta}\|v_{k+1}-v_k\|^2\\
	&&\qquad \leq 0,
\end{eqnarray*}
This implies
\[ \mathcal{E}_{k} + \frac{\gamma t^2_{k+1}}{2}\|B(v_{k}-y^*)\|^2 \leq \mathcal{E}_{k-1} + \frac{\gamma t^2_{k}}{2}\|B(v_{k-1}-y^*)\|^2\leq \mathcal{E}_{1} + \frac{\gamma t^2_{2}}{2}\|B(v_{1}-y^*)\|^2. \]
Then, using the definition of  $\mathcal{E}_{k}$, we obtain
\begin{eqnarray}\label{eq_th1_1}
	\mathcal{L}(x_k,y_k,\lambda^*)-\mathcal{L}(x^*,y^*,\lambda^*)= \frac{I_k^1}{t^2_k}\leq \frac{\mathcal{E}_{k}}{t^2_k}\leq \frac{\mathcal{E}_{1}+\frac{\gamma t^2_{2}}{2}\|B(v_{1}-y^*)\|^2}{t^2_k}
\end{eqnarray}
and 
\begin{eqnarray}\label{eq_th1_2}
	\|\lambda_k-\lambda^*\| = \sqrt{2\gamma I_k^4}\leq  \sqrt{2\gamma\mathcal{E}_{k}}\leq  \sqrt{2\gamma\mathcal{E}_{1}+{\gamma^2 t^2_{2}}\|B(v_{1}-y^*)\|^2} .
\end{eqnarray}
For the sake of simplicity in notation, let's define two sequences  $\{h_k\}_{k\geq 1}$ and $\{a_k\}_{k\geq 1}$ as
\begin{eqnarray}\label{eq_th1_3}
	h_k:=  t^2_{k}(Ax_{k}+By_{k}-b),\qquad a_k := \frac{t_{k}^2-t_{k+1}(t_{k+1}-1)}{t^2_{k}}.
\end{eqnarray}
From Algorithm \ref{al_2A}, we know 
\begin{eqnarray*}\label{eq_th1_4}
	\quad u_{k+1} = t_{k+1}x_{k+1} - (t_{k+1}-1)x_{k},\quad v_{k+1} = t_{k+1}y_{k+1} - (t_{k+1}-1)y_{k},
\end{eqnarray*}
which implies that
\begin{eqnarray*}\label{eq_lambda_va}
	\lambda_{k+1} - \lambda_1 &=& \sum^{k}_{i=1}(\lambda_{i+1}-\lambda_{i})\nonumber\\
	&=& \sum^k_{i=1}\gamma t_{i+1}(Au_{i+1}+Bv_{i+1}-b)\nonumber\\
	&=& \gamma \sum^k_{i=1} \left[ t^2_{i+1}(Ax_{i+1}+By_{i+1}-b)-  t_{i+1}(t_{i+1}-1)(Ax_{i}+By_{i}-b)\right]\\
	&=& \gamma \sum^k_{i=1}\left[  h_{i+1}-h_{i}+ \frac{t_{i}^2-t_{i+1}(t_{i+1}-1)}{t^2_{i}}h_{i}\right]\nonumber\\
	&=& \gamma(h_{k+1}-h_{1})+ \gamma \sum^k_{i=1}a_{i}h_{i}.\nonumber
\end{eqnarray*}
This, combined with \eqref{eq_th1_2} and the triangle inequality, results in:
\begin{eqnarray*}
	\left\|h_{k+1} +\sum^k_{i=1}a_{i} h_{i}\right \|&\leq &\|h_1\|+\frac{\|\lambda_{k+1}-\lambda_1\|}{\gamma}\\
	&\leq&\|h_1\|+\frac{\|\lambda_1-\lambda^*\|+\sqrt{2\gamma\mathcal{E}_{1}+{\gamma^2 t^2_{2}}\|B(v_{1}-y^*)\|^2}}{\gamma}.
\end{eqnarray*}
Given our assumption, we know $a_k\in[0,1)$ for any $k\geq 1$. Applying Lemma \ref{le_app1} in Appendix, we find 
 \[\|h_k\| \leq 3\|h_1\| + \frac{2\|\lambda_1-\lambda^*\|+2\sqrt{2\gamma\mathcal{E}_{1}+{\gamma^2 t^2_{2}}\|B(v_{1}-y^*)\|^2}}{\gamma},\qquad \forall k\geq 1.\]
Now, from \eqref{eq_th1_3}, we deduce that
\[\|Ax_{k}+By_{k}-b\|=\frac{\|h_k\|}{t^2_{k}}\leq  \frac{3\|h_1\|}{t_{k}^2} + \frac{2\|\lambda_1-\lambda^*\|+2\sqrt{2\gamma\mathcal{E}_{1}+{\gamma^2 t^2_{2}}\|B(v_{1}-y^*)\|^2}}{\gamma t_{k}^2} \]
for any  $k\geq 1$.
This together with \eqref{lagran} and \eqref{eq_th1_1} implies
\begin{eqnarray*}
&&	|f(x_k)+g(y_k)-f(x^*)-g(y^*)|\leq  \mathcal{L}(x_k,y_k,\lambda^*)-\mathcal{L}(x^*,y^*,\lambda^*)+\|\lambda^*\|\|Ax_{k}+By_{k}-b\|\\
	&&\leq  \frac{1}{t_{k}^2}\left(\mathcal{E}_{1}+\frac{\gamma t^2_{2}}{2}\|B(v_{1}-y^*)\|^2\right)\\
	&&\quad+  \frac{\|\lambda^*\|}{t_{k}^2}\left(3\|h_1\|+\frac{2\|\lambda_1-\lambda^*\|+2\sqrt{2\gamma\mathcal{E}_{1}+{\gamma^2 t^2_{2}}\|B(v_{1}-y^*)\|^2}}{\gamma}\right).
\end{eqnarray*}
This concludes our proof.
\end{proof}

\begin{theorem}\label{th_B2}
Suppose that Assumption \ref{ass} holds. Let $\{(x_k,y_k)\}_{k\geq 1}$ be the sequence generated by Algorithm \ref{al_2B} and  $(x^*,y^*,\lambda^*)\in\Omega$.  Assume that $\alpha\leq \frac{1}{L_{f_2}}$, $\beta\leq \frac{t_1^2}{L_{g_2}+t_1^2\gamma\|B\|^2}$, and   $\{t_k\}_{k\geq 1}\subseteq [1,+\infty)$ is a nondecreasing sequence satisfying
\[ t_{k+1}^2\leq t^2_k + t_{k+1}\quad\text{ and }\quad t^2_{k+1}\leq t^2_{k}+\beta\mu_g t_{k}.\] 
 the following conclusions hold:
\begin{eqnarray*}
&& \mathcal{L}(x_k,y_k,\lambda^*)-\mathcal{L}(x^*,y^*,\lambda^*)\leq \frac{\mathcal{E}_{1}}{t^2_k},\\
&&  \|Ax_{k}+By_{k}-b\|\leq  \frac{C_2}{t_{k}^2},\\
&& |f(x_k)+g(y_k)-f(x^*)-g(y^*)|\leq \frac{\mathcal{E}_{1}+C_2\|\lambda^*\|}{t_{k}^2},
\end{eqnarray*}
where $\mathcal{E}_{1}$ is defined in \eqref{E_1} and
\begin{equation}\label{C_2}
	C_2= 3t^2_1\|Ax_1+By_1-b\|+\frac{2\|\lambda_1-\lambda^*\|+2\sqrt{2\gamma\mathcal{E}_{1}}}{\gamma}.
\end{equation} 
\end{theorem}

\begin{proof}
Since Algorithm \ref{al_2B} is equivalent to scheme \eqref{disB} with 	
	\[ \bar{\lambda}_{k+1} = \hat{\lambda}_{k+1} =\lambda_k+\gamma t_{k+1}(Au_{k+1}+Bv_{k}-b),\]
we get
\[\|{\lambda}_{k+1} - \bar{\lambda}_{k+1} \|^2= \|\gamma  t_{k+1}B(v_{k+1}-v_k)\|\leq \gamma^2t_{k+1}^2\|B\|^2\|v_{k+1}-v_k\|^2.\]
Since $\beta\leq \frac{t_1^2}{L_{g_2}+t_1^2\gamma\|B\|^2}$ and $\{t_k\}_{k\geq 1}$ is a nondecreasing, we have $\gamma\beta\|B\|^2<1$ and 
$$(1-\gamma\beta\|B\|^2) t^2_{k}-\beta L_{g_2}\geq (1-\gamma\beta\|B\|^2) t^2_{1}-\beta L_{g_2}\geq 0.$$
By Lemma \ref{le_B1} and assumption, we have
\begin{eqnarray*}
	\mathcal{E}_{k+1}-\mathcal{E}_k &\leq& \frac{t^2_{k+2}-t^2_{k+1}-\beta\mu_g t_{k+1}}{2\beta}\|v_{k+1}-y^*\|^2-\frac{1-\alpha L_{f_2}}{2\alpha}\|u_{k+1}-u_k\|^2\\
	&&-\frac{(1-\gamma\beta\|B\|^2) t^2_{k+1}-\beta L_{g_2}}{2\beta}\|v_{k+1}-v_k\|^2\\
	&\leq & 0.
\end{eqnarray*}
It yields 
\[ \mathcal{E}_{k}\leq \mathcal{E}_{k-1}\leq \mathcal{E}_{1},\qquad \forall k\geq 1.\]
By essentially same argument as in the proof of Theorem \ref{th_B1}, we can get the desired results.
\end{proof}

\begin{remark}
As shown in the proof of Lemma \ref{le_B1}, we can see that when  $f_2$  vanishes in problem \eqref{ques}, i.e.,  $f=f_1$, the parameter $\alpha$ in both Algorithm \ref{al_2A} and Algorithm  \ref{al_2B} can be chosen within the interval $(0,+\infty)$. Similarly,  when  $g_2$  vanishes in problem \eqref{ques},  the assumption $\beta\leq \frac{t_1^2}{L_{g_2}}$ in  Algorithm \ref{al_2A} can be omitted and the assumption  $\beta\leq \frac{t_1^2}{L_{g_2}+t_1^2\gamma\|B\|^2}$ in Algorithm \ref{al_2B}  can be reduced to $\beta\leq \frac{1}{\gamma\|B\|^2}$.  
	\end{remark}

\subsection{Reduction to accelerated linearized ALM based on Nesterov's first scheme}\label{sec24}

From Algorithm \ref{al_2A}, we can easily get an accelerated linearized ALM for solving the one-block linearly constrained problem \eqref{ques_one} in  the convex case,  and from  Algorithm \ref{al_2B} we can get another accelerated linearized ALM for solving problem \eqref{ques_one} in the strongly convex case.

{\bf Convex case.} Let $B=0$. Problem \eqref{ques} becomes

\[
				\min_{x\in  \mathbb{R}^{m},y\in  \mathbb{R}^{n}} \quad f(x)+g(y) \qquad s.t.  \  Ax=b
\]
with  $f(x) = f_1(x)+f_2(x)$ and  $g(y)=g_1(y)+g_2(y)$. The above problem is equivalent to the following two independent problems: The one-block linearly constrained problem with variable $x$
\begin{equation}\label{one_bolck}
	\min_{x\in  \mathbb{R}^{m}}\quad f(x) = f_1(x)+f_2(x) \qquad s.t.  \  Ax=b,
\end{equation}
and  the unconstrained problem  with variable $y$
\[
				\min_{y\in  \mathbb{R}^{n}} g(y)=g_1(y)+g_2(y).
\]
In this case, the results of Theorem \ref{th_B1} hold for any strongly convex function $g_1$ and  smooth convex function $g_2$. By examining Algorithm \ref{al_2A}, it is evident that the subproblems update for functions $f$ and $g$ are independent. When focusing only on the convex function $f$, we derive the accelerated linearized ALM (Algorithm \ref{al_2B1}) for solving the one-block linearly constrained problem \eqref{one_bolck}.
\begin{algorithm}
        \caption{Accelerated Linearized ALM-Nesterov's first Scheme (I) for problem \eqref{one_bolck}}
        \label{al_2B1}
        {\bf Initialize:} Let $x_1=x_0\in dom(f), \lambda_1\in\mathbb{R}^p$, $\alpha>0,\gamma>0$, $t_1\geq 1$. \\
        \For{$k = 1, 2,\cdots$}{
        $\bar{x}_k = x_k + \frac{t_k-1}{t_{k+1}}(x_k-x_{k-1})$.\\
      $x_{k+1} = \mathop{\arg\min}_x\left(f_1(x)+\langle A^T\lambda_k+\nabla f_2(\bar{x}_k),x\rangle+\frac{\gamma t^2_{k+1}}{2}\|A(x-x_k)+\frac{1}{t_{k+1}}(Ax_k-b)\|^2\right.$\\
     $\left.\qquad\qquad\qquad \qquad+\frac{1}{2\alpha}\left\|x-\bar{x}_k\right\|^2\right)$.\\
      $u_{k+1}=x_{k+1}+(t_{k+1}-1)(x_{k+1}-x_k)$\\
		$\lambda_{k+1} = \lambda_k + \gamma t_{k+1}(Au_{k+1}-b)$.
}
\end{algorithm}

Construct the energy sequence $\{\mathcal{E}_k\}_{k\geq 1}$ of Algorithm \ref{al_2B1} as
\begin{eqnarray*}
	\mathcal{E}_k =t^2_k(f(x_k)-f(x^*)+\langle \lambda^*,Ax_k-b\rangle)+\frac{1}{2\alpha}\|u_k-x^*\|^2+\frac{1}{2\gamma}\|\lambda_k-\lambda^*\|^2.
\end{eqnarray*}
Under the assumptions that $f_1$ is  closed and convex, $f_2$ is convex and  has a Lipschitz continuous gradient with constant $L_{f_2}$, and $t_{k+1}^2\leq t^2_k + t_{k+1}$, by essentially same argument as in the proof of Lemma \ref{le_B1} and Theorem \ref{th_B1} we can obtain that
\[\mathcal{E}_{k+1}\leq \mathcal{E}_k\leq \mathcal{E}_1,\quad k\geq 1.\]
Then we get the follow convergence rates of Algorithm \ref{al_2B1}.
\begin{theorem}\label{th_B11}
Suppose that $f_1$ is a proper, closed and convex function, $f_2$ is convex and  has a Lipschitz continuous gradient with constant $L_{f_2}$, and the saddle points set $\Omega$ is nonempty. Let $\{x_k\}_{k \geq 1}$ be the sequence generated by Algorithm \ref{al_2B1} and  $(x^*,\lambda^*)\in\Omega$.  Assume that  $\alpha\leq \frac{1}{L_{f_2}}$  and $\{t_k\}_{k\geq 1}\subseteq [1,+\infty)$ is a nondecreasing sequence satisfying
\begin{eqnarray*}
	t_{k+1}^2\leq t^2_k + t_{k+1} 
\end{eqnarray*}
Then the following conclusions hold:
\begin{eqnarray*}
&&  \|Ax_{k}-b\|\leq  \frac{C_3}{t_{k}^2},\\
&& |f(x_k)-f(x^*)|\leq \frac{\mathcal{E}_{1}+C_3\|\lambda^*\|}{t_{k}^2},
\end{eqnarray*}
where $\mathcal{E}_{1} = t^2_1(f(x_1)-f(x^*)+\langle \lambda^*,Ax_1-b\rangle)+\frac{1}{2\alpha}\|x_1-x^*\|^2+\frac{1}{2\gamma}\|\lambda_1-\lambda^*\|^2$
 and 
$
 C_3= 3t^2_1\|Ax_1-b\|+\frac{2\|\lambda_1-\lambda^*\|+2\sqrt{2\gamma\mathcal{E}_{1}}}{\gamma}. 	
$\end{theorem}
\begin{remark}
	When $A=0$, $b=0$, Algorithm \ref{al_2B1} reduces to Nesterov's fisrt scheme \eqref{FISTA} for solving unconstrained composite convex problems. The convergence results in Theorem \ref{th_B11} are also consistent with the convergence results of Nesterov's fisrt scheme \eqref{FISTA} in \cite{BeckIma,Chambolle,Attouch2018}.
\end{remark}

{\bf Strongly convex case.} Let $A=0$. By focusing only on the part of strongly convex function $g$ in Algorithm \ref{al_2B}, we obtain the accelerated linearized ALM (Algorithm \ref{al_2B2}) for solving  the one-block linearly constrained problem:
\begin{equation}\label{one_bolck_two}
	\min_{y\in  \mathbb{R}^{n}}\quad g(y) = g_1(y)+g_2(y) \qquad s.t.  \  By=b.
\end{equation}

\begin{algorithm}
        \caption{Accelerated Linearized ALM-Nesterov's first Scheme (II) for problem \eqref{one_bolck_two}}
        \label{al_2B2}
        {\bf Initialize:} Let $y_1=y_0=v_1\in dom(g), \lambda_1\in\mathbb{R}^p$, $\beta>0,\gamma>0$, $t_1\geq 1$.  \\
        \For{$k = 1, 2,\cdots$}{
        $\bar{y}_k = y_k + \frac{t_k-1}{t_{k+1}}(y_k-y_{k-1})$.\\
      $\bar{\lambda}_{k+1} =\lambda_k+ \gamma t_{k+1}(Bv_{k}-b)$.\\
      Let $\eta_k = \frac{\beta}{t_{k+1}^2+\beta\mu_g(t_{k+1}-1)}$.\\
      $y_{k+1} ={\bf Prox}_{{\eta_k},g_1}\left(\bar{y}_k -\eta_k(\mu_g(t_{k+1}-1)(\bar{y}_k-y_k)+B^T\bar{\lambda}_{k+1}+\nabla g_2(\bar{y}_k))\right)$.\\
      $v_{k+1}=y_{k+1}+(t_{k+1}-1)(y_{k+1}-y_k)$.\\
		$\lambda_{k+1} = \lambda_k + \gamma t_{k+1}(Bv_{k+1}-b)$.
}
\end{algorithm}
Construct the energy sequence $\{\mathcal{E}_k\}_{k\geq 1}$ of Algorithm \ref{al_2B2} as
\begin{eqnarray*}
	\mathcal{E}_k =t^2_k(g(y_k)-g(y^*)+\langle \lambda^*,By_k-b\rangle)+\frac{t_{k+1}^2}{2\beta}\|v_k-y^*\|^2+\frac{1}{2\gamma}\|\lambda_k-\lambda^*\|^2.
\end{eqnarray*}
Under the assumptions that $g_1$ is a closed and $\mu_g$-strongly convex function, $g_2$ is convex has a Lipschitz continuous gradient with constant $L_{g_2}$ and $t_{k+1}^2\leq t^2_k + t_{k+1}$, $ t^2_{k+1}\leq t^2_{k}+\beta\mu_g t_{k}$, by essentially same argument as in the proof of Lemma \ref{le_A1} and Theorem \ref{th_A2} we can obtain that
\[\mathcal{E}_{k+1}\leq \mathcal{E}_k\leq \mathcal{E}_1,\quad k\geq 1,\]

Then we get the follow convergence rates of Algorithm \ref{al_2B2}.

\begin{theorem}\label{th_B22}
Suppose that $g_1$ is a proper, closed and $\mu_g$-strongly convex function, $g_2$ is convex has a Lipschitz continuous gradient with constant $L_{g_2}$, and the saddle points set $\Omega$ is nonempty. Let $\{y_k\}_{k \geq 1}$ be the sequence generated by Algorithm \ref{al_2B2} and  $(y^*,\lambda^*)\in\Omega$. Assume that  $\beta\leq \frac{t_1^2}{L_{g_2}+t_1^2\gamma\|B\|^2}$ and $\{t_k\}_{k\geq 1}\subseteq [1,+\infty)$ is a nondecreasing sequence satisfying
\begin{eqnarray*}
	t_{k+1}^2\leq t^2_k + t_{k+1}\quad\text{ and }\quad t^2_{k+1}\leq t^2_{k}+\beta\mu_g t_{k}.
\end{eqnarray*}
Then the following conclusions hold:
\begin{eqnarray*}
&&  \|By_{k}-b\|\leq  \frac{C_4}{t_{k}^2},\\
&& |g(y_k)-g(y^*)|\leq \frac{\mathcal{E}_{1}+C_4\|\lambda^*\|}{t_{k}^2},
\end{eqnarray*}
where $\mathcal{E}_{1} = t^2_1(g(y_1)-g(y^*)+\langle \lambda^*,By_1-b\rangle)+\frac{t_1^2}{2\beta}\|y_1-y^*\|^2+\frac{1}{2\gamma}\|\lambda_1-\lambda^*\|^2$
 and 
$
 C_4= 3t^2_1\|By_1-b\|+\frac{2\|\lambda_1-\lambda^*\|+2\sqrt{2\gamma\mathcal{E}_{1}}}{\gamma}. 	
$.
\end{theorem}

\begin{remark}
	Incorporating the appropriate parameter selection, we can obtain non-ergodic $\mathcal{O}(1/k^2)$ convergence rate results for both Algorithm \ref{al_2B1} and Algorithm \ref{al_2B2}. In the convex case, the $\mathcal{O}(1/k^2)$ convergence rate of Algorithm \ref{al_2B1} is consistent with the convergence results of the accelerated linearized ALM in \cite{HeNA}, which also employed Nesterov's first scheme \eqref{FISTA} within a primal-dual framework.	In the strongly convex case,  Algorithm \ref{al_2B2} allows to linearize the augmented term of Algorithm \ref{al_2B1}, the $\mathcal{O}(1/k^2)$ convergence rate is also consistent with the convergence results of  the accelerated linearized ALM in \cite{Sabach2022}.
	\end{remark}

\subsection{Parameter considerations}\label{sec_para}

In this part, we will provide a concise discussion on the selection scheme of parameters on Algorithm \ref{al_2A}. Similar discussions can be carried out for Algorithm \ref{al_2B} and other methods introduced in this paper. To avoid repetition, we will  focus on Algorithm \ref{al_2A} only.

 Let $a=\frac{\beta\mu_g}{1+\beta\gamma\|B\|^2}  $ and take
\begin{equation}\label{max_tk}
	 t_{k+1} = \min\left\{\frac{1+\sqrt{1+4t_k^2}}{2},\sqrt{t_k^2+at_k}\right\}.
\end{equation}
Under this setting, the assumptions of Theorem \ref{th_B1} are satisfied and the sequence ${t_k}$ grows rapidly. Since $t_1\geq 1$, following Lemma \ref{le_appt1} and Lemma \ref{le_appt2}, we have
\[t_k\geq t_1+\min\{1,2b\}\frac{k-1}{2}\geq \min\{1,2b\}\frac{k+1}{2}, \]
where $b=\frac{2at_1}{a+4t_1}< \frac{a}{2}$. As a result, we have the following non-ergodic $\mathcal{O}(1/k^2)$ convergence rates:
\begin{eqnarray*}
&& \mathcal{L}(x_k,y_k,\lambda^*)-\mathcal{L}(x^*,y^*,\lambda^*)\leq \frac{4}{\min \{1,4b^2\}}\frac{\mathcal{E}_{1}+\frac{\gamma t^2_{2}}{2}\|B(y_{1}-y^*)\|^2}{(k+1)^2},\\
&&  \|Ax_{k}+By_{k}-b\|\leq  \frac{4}{\min \{1,4b^2\}}\frac{C_1}{(k+1)^2},\\
&& |f(x_k)+g(y_k)-f(x^*)-g(y^*)|\leq\frac{4}{\min \{1,4b^2\}} \frac{\mathcal{E}_{1}+\frac{\gamma t^2_{2}}{2}\|B(y_{1}-y^*)\|^2+C_1\|\lambda^*\|}{(k+1)^2}.
\end{eqnarray*} 
If we choose parameters $\beta,\gamma, t_1$ as follows:
\begin{equation*}
	t_1> \sqrt{\frac{L_{g_2}}{\mu_g}},\quad \beta\in\left(\frac{1}{\mu_g},\frac{t_1^2}{L_{g_2}}\right],\quad \gamma< \frac{\beta\mu_g-1}{\beta\|B\|^2},
\end{equation*}
 which ensure $a>1$. Moreover, let $t_1$  also satisfy $t_1\geq \frac{a}{4(a-1)}$. This implies $b\geq \frac{1}{2}$, and consequently $t_{k+1}\geq \frac{k+1}{2}$. Thus, we can obtain the optimal convergence results for Algorithm \ref{al_2A}. 
 
If $t_k$ is nondecreasing and satisfies $t_{k+1}^2\leq t^2_k + t_{k+1}$, we have $(t_{k+1}-\frac{1}{2})^2\leq  t^2_k+\frac{1}{4}\leq (t_{k}+\frac{1}{2})^2$, which leads to  $t_{k+1}\leq t_k+1$. For any $a>1$, taking $t_1\geq \frac{1}{a-1}$ implies $\beta>\frac{1}{\mu_g}\left(1+\frac{1}{t_1}\right)$ and $\gamma< \frac{t_1(\beta\mu_g-1)-1}{(t_1+1)\beta\|B\|^2}$. As a result, we have $t_{k+1}-at_k\leq (1-a)t_{k}+1\leq(1-a)t_{1}+1\leq 0$. This, combined with  $t_{k+1}^2\leq t^2_k + t_{k+1}$, implies 
$t_{k+1}^2\leq t^2_k + at_{k}$.  By Theorem \ref{th_B1}, we can obtain the following results.

\begin{corollary}\label{cor_para}
Suppose that Assumption \ref{ass} holds. Let $\{(x_k,y_k)\}_{k\geq 1}$ be the sequence generated by Algorithm \ref{al_1A} and  $(x^*,y^*,\lambda^*)\in\Omega$.  Assume that 
\begin{equation}\label{para_optim}
		t_1> \max\left\{1,\sqrt{\frac{2L_{g_2}}{\mu_g}}\right\},\ \  \alpha\leq \frac{1}{L_{f_2}},\ \  \beta\in\left(\frac{1}{\mu_g}\left(1+\frac{1}{t_1}\right),\frac{t_1^2}{L_{g_2}}\right],\ \  \gamma< \frac{t_1(\beta\mu_g-1)-1}{(t_1+1)\beta\|B\|^2},  
\end{equation}
and $\{t_k\}_{k\geq 1}$ is a nondecreasing sequence satisfying
\begin{eqnarray}\label{eq_cor_tk}
	t_{k+1}^2\leq t^2_k + t_{k+1}.
\end{eqnarray}
Then the following conclusions hold:
\begin{eqnarray*}
&& \mathcal{L}(x_k,y_k,\lambda^*)-\mathcal{L}(x^*,y^*,\lambda^*)\leq \frac{\mathcal{E}_{1}+\frac{\gamma t^2_{2}}{2}\|B(y_{1}-y^*)\|^2}{t^2_k},\\
&&  \|Ax_{k}+By_{k}-b\|\leq  \frac{C_1}{t_{k}^2},\\
&& |f(x_k)+g(y_k)-f(x^*)-g(y^*)|\leq \frac{\mathcal{E}_{1}+\frac{\gamma t^2_{2}}{2}\|B(y_{1}-y^*)\|^2+C_1\|\lambda^*\|}{t_{k}^2}.
\end{eqnarray*}
\end{corollary}

\begin{remark}
When \eqref{eq_cor_tk} holds with equality, i.e.,
\[ t_{k+1}=\frac{1+\sqrt{1+4t_k^2}}{2},\]
it becomes the classical Nesterov extrapolation  in \cite{Nesterov1983}.
 When  $f_2$  vanishes in problem \eqref{ques}, i.e.,  $f=f_1$, the parameter assumption $\alpha\leq\frac{1}{L_{f_2}}$ in \eqref{para_optim} can be omitted. When $g_2$  vanishes in problem \eqref{ques},  we can take $t_1\geq 1$ and $\beta\geq \frac{1}{\mu_g}\left(1+\frac{1}{t_1}\right)$  in \eqref{para_optim}.
	\end{remark}

\section{Accelerated Linearized ADMM Based on  Nesterov's Second Scheme}\label{sec_2}
\subsection{Formulation of algorithm}
In this part, we extend Nesterov's second scheme \eqref{NestSS} to the ADMM framework. By using the optimality condition, algorithm  \eqref{NestSS} is equivalent to the following scheme: 
\begin{numcases}{}
        {y}_k = x_k + \frac{t_k-1}{t_{k+1}}(x_k-x_{k-1}),\nonumber \\
		u_{k+1}-u_k = -{s t_{k+1}}(\widetilde{\nabla} g(\boldsymbol{u_{k+1}})+\nabla f(y_k)),\label{dis_Nest1}\\
		x_{k+1} = \frac{1}{t_{k+1}}u_{k+1}+ \frac{t_{k+1}-1}{t_{k+1}}x_{k}.\nonumber
 \end{numcases}
Motivated by \eqref{dis_Nest1}, we associate  problem \eqref{ques} to the following scheme:
\begin{subequations}\label{disA}
\begin{numcases}{}
        (\bar{x}_k, \bar{y}_k) = (x_k,y_k) + \frac{t_k-1}{t_{k+1}}[(x_k,y_k)-(x_{k-1},y_{k-1})],\label{disA1}\\
		u_{k+1}-u_k = -\boldsymbol{\alpha t_{k+1}}(\widetilde{\nabla} f_1(\boldsymbol{u_{k+1}})+\nabla f_2(\bar{x}_k)+\boldsymbol{A^T\bar{\lambda}_{k+1}}),\label{disA2}\\
		v_{k+1}-v_k = -\boldsymbol{\frac{\beta}{t_{k+1}}}(\widetilde{\nabla} g_1(\boldsymbol{v_{k+1}})+\nabla g_2(\bar{y}_k) +\boldsymbol{B^T{\hat{\lambda}}_{k+1}}),\label{disA3}\\
		(x_{k+1},y_{k+1}) = \frac{1}{t_{k+1}}(u_{k+1},v_{k+1})+ \frac{t_{k+1}-1}{t_{k+1}}(x_{k},y_k),\label{disA4}\\
		\lambda_{k+1} = \lambda_k + \gamma t_{k+1}(Au_{k+1}+Bv_{k+1}-b),\label{disA5}
 \end{numcases}
\end{subequations}
where $x_1=x_0\in\mathbb{R}^m, y_1=y_0\in\mathbb{R}^n$, $u_1 = x_1$, $v_1=y_1$, $\alpha>0$, $\beta>0$, $\gamma>0$, and $t_{k}\geq 1$ is a nondecreasing sequence. We also choose $\bar{\lambda}_{k+1}$  and  $\hat{\lambda}_{k+1}$ as
\begin{eqnarray*}\label{lambda_choice}
	 \bar{\lambda}_{k+1} = \lambda_k +\gamma t_{k+1}(Au_{k+1}+Bv_{k}-b),\qquad \hat{\lambda}_{k+1} = {\lambda}_{k+1} \text{ or } \bar{\lambda}_{k+1} .
\end{eqnarray*}
It can be observed from \eqref{disA2} and  \eqref{disA4} that the update of the primal variable $x_k$ is same as  \eqref{dis_Nest1}.

Since $\bar{\lambda}_{k+1} = \lambda_k +\ \gamma t_{k+1}(Au_{k+1}+Bv_{k}-b)$,  using the optimality condition of \eqref{disA2}, we have
\begin{eqnarray}\label{updata_uA1}
	u_{k+1} &=& \mathop{\arg\min}_u \left( f_1(u)+\langle A^T\lambda_k+\nabla f_2(\bar{x}_k),u\rangle+\frac{\gamma t_{k+1}}{2}\|Au+Bv_k-b\|^2\right.\nonumber \\
	&& \qquad\qquad\left.+\frac{1}{2\alpha t_{k+1}}\left\|u-u_k\right\|^2\right).
\end{eqnarray}
Similarly, when $\hat{\lambda}_{k+1} ={\lambda}_{k+1}$,  equation \eqref{disA3} is equivalent to
\begin{eqnarray}\label{updata_vA1}
	v_{k+1} &=&\mathop{\arg\min}_v \left(g_1(v)+\langle B^T\lambda_k+\nabla g_2(\bar{y}_k),v\rangle+\frac{\gamma t_{k+1}}{2}\|Au_{k+1}+Bv-b\|^2\right. \nonumber\\
	&&\left.\qquad\qquad+\frac{t_{k+1}}{2\beta}\left\|v-v_k\right\|^2.\right)
\end{eqnarray}
In this case, we can rewrite equation \eqref{disA} as Algorithm \ref{al_1A},  permitting linearization of the smooth functions $f_2$ and $g_2$.

When $\hat{\lambda}_{k+1} =\bar{\lambda}_{k+1}$,   equation \eqref{disA3} is equivalent to
\begin{equation}\label{updata_vA2}
	v_{k+1} ={\bf Prox}_{\frac{\beta}{t_{k+1}},g_1}\left(v_k-\frac{\beta}{t_{k+1}}(B^T\bar{\lambda}_{k+1}+\nabla g_2(\bar{y}_k)\right).\end{equation}
In this case, we can rewrite equation  \eqref{disA} as Algorithm \ref{al_1B},  permitting further  linearization of  the augmented term $\|Au_{k+1}+Bv-b\|^2$ in Algorithm \ref{al_1A}.

\begin{algorithm}
        \caption{Accelerated Linearized ADMM-Nesterov's Second Scheme (I)}
        \label{al_1A}
        {\bf Initialize:} Let $x_1=x_0=u_1\in dom(f), y_1=y_0=v_1\in dom(g), \lambda_1\in\mathbb{R}^p$, $\alpha>0,\beta>0,\gamma>0$, $t_1\geq 1$. \\
        \For{$k = 1, 2,\cdots$}{
        $(\bar{x}_k, \bar{y}_k) = (x_k,y_k) + \frac{t_k-1}{t_{k+1}}[(x_k,y_k)-(x_{k-1},y_{k-1})]$.\\
      $u_{k+1} = \mathop{\arg\min}_u\left(f_1(u)+\langle A^T\lambda_k+\nabla f_2(\bar{x}_k),u\rangle+\frac{\gamma t_{k+1}}{2}\|Au+Bv_k-b\|^2+\frac{1}{2\alpha t_{k+1}}\left\|u-u_k\right\|^2\right)$. \\
      $v_{k+1} =\mathop{\arg\min}_v\left(g_1(v)+\langle B^T\lambda_k+\nabla g_2(\bar{y}_k),v\rangle+\frac{\gamma t_{k+1}}{2}\|Au_{k+1}+Bv-b\|^2+\frac{t_{k+1}}{2\beta}\left\|v-v_k\right\|^2\right)$. \\
     $(x_{k+1},y_{k+1}) = \frac{1}{t_{k+1}}(u_{k+1},v_{k+1})+ \frac{t_{k+1}-1}{t_{k+1}}(x_{k},y_k)$.\\
		$\lambda_{k+1} = \lambda_k + \gamma t_{k+1}(Au_{k+1}+Bv_{k+1}-b)$.
}
\end{algorithm}

\begin{algorithm}
        \caption{Accelerated Linearized ADMM-Nesterov's Second Scheme (II)}
        \label{al_1B}
        {\bf Initialize:} Let $x_1=x_0=u_1\in dom(f), y_1=y_0=v_1\in dom(g), \lambda_1\in\mathbb{R}^p$, $\alpha>0,\beta>0,\gamma>0$, $t_1\geq 1$. \\
        \For{$k = 1, 2,\cdots$}{
        $(\bar{x}_k, \bar{y}_k) = (x_k,y_k) + \frac{t_k-1}{t_{k+1}}[(x_k,y_k)-(x_{k-1},y_{k-1})]$.\\
      $u_{k+1} = \mathop{\arg\min}_u \left( f_1(u)+\langle A^T\lambda_k+\nabla f_2(\bar{x}_k),u\rangle+\frac{\gamma t_{k+1}}{2}\|Au+Bv_k-b\|^2+\frac{1}{2\alpha t_{k+1}}\left\|u-u_k\right\|^2\right)$. \\
      $\bar{\lambda}_{k+1} =\lambda_k+ \gamma t_{k+1}(Au_{k+1}+Bv_{k}-b)$.\\
      $v_{k+1} ={\bf Prox}_{\frac{\beta}{t_{k+1}},g_1}\left(v_k-\frac{\beta}{t_{k+1}}(B^T\bar{\lambda}_{k+1}+\nabla g_2(\bar{y}_k)\right) $. \\
     $(x_{k+1},y_{k+1}) = \frac{1}{t_{k+1}}(u_{k+1},v_{k+1})+ \frac{t_{k+1}-1}{t_{k+1}}(x_{k},y_k)$.\\
		$\lambda_{k+1} = \lambda_k + \gamma t_{k+1}(Au_{k+1}+Bv_{k+1}-b)$.
}
\end{algorithm}

\subsection{A single-step analysis}
Next, we present a single-iteration analysis for the sequence that adheres to \eqref{disA}. These results will be utilized in the convergence analysis of both Algorithm \ref{al_1A} and Algorithm \ref{al_1B}. 

\begin{lemma}\label{le_A1}
Suppose that Assumption \ref{ass} holds. Let $\{(x_k,y_k,\lambda_k,u_k,v_k,\bar{\lambda}_k, \hat{\lambda}_k)\}_{k \geq 1}$ be the sequence satisfying \eqref{disA}, with $\alpha,\beta,\gamma>0$ and $\{t_k\}_{k\geq 1}\subseteq [1,+\infty)$ such that
\[ t_{k+1}^2\leq t^2_k + t_{k+1},\qquad k\geq 1.\]
 Then  for any $(x^*,y^*,\lambda^*)\in\Omega$, we have
\begin{eqnarray*} 
	&&\mathcal{E}_{k+1}-\mathcal{E}_k \leq \frac{t^2_{k+2}-t^2_{k+1}-\beta\mu_g t_{k+1}}{2\beta}\|v_{k+1}-y^*\|^2-\frac{1-\alpha L_{f_2}}{2\alpha}\|u_{k+1}-u_k\|^2\\
	&&-\frac{t^2_{k+1}-\beta L_{g_2}}{2\beta}\|v_{k+1}-v_k\|^2+\frac{1}{2\gamma}\|\lambda_{k+1}-\bar{\lambda}_{k+1}\|^2+t_{k+1}\langle B(v_{k+1}-y^*),\bar{\lambda}_{k+1}- \hat{\lambda}_{k+1}\rangle.
\end{eqnarray*}
\end{lemma}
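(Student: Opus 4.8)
The plan is to bound each of the four increments $I^i_{k+1}-I^i_k$ separately and then recombine, treating the Lagrangian term $I^1$ together with the subproblem optimality conditions. The starting observation is that, by the left inequality in \eqref{saddle}, $\phi(x,y):=\mathcal{L}(x,y,\lambda^*)-\mathcal{L}(x^*,y^*,\lambda^*)\ge 0$, and since $Ax^*+By^*=b$ it splits additively into an $x$-block $\Phi_x(x)-\Phi_x(x^*)$ and a $y$-block $\Phi_y(y)-\Phi_y(y^*)$, with $\Phi_x(x)=f(x)+\langle\lambda^*,Ax\rangle$, $\Phi_y(y)=g(y)+\langle\lambda^*,By\rangle$ absorbing the multiplier term linearly. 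Using the hypothesis in the form $t_{k+1}^2-t_{k+1}\le t_k^2$ together with $\phi(x_k,y_k)\ge0$, I would first bound
\[I^1_{k+1}-I^1_k\le t_{k+1}^2\phi(x_{k+1},y_{k+1})-(t_{k+1}^2-t_{k+1})\phi(x_k,y_k),\]
so that each block becomes a Nesterov-type three-term combination with weights $t_{k+1}^2,\,t_{k+1}^2-t_{k+1},\,t_{k+1}$ at the points $x_{k+1},x_k,x^*$ (and likewise in $y$).

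The key algebraic device is a pair of extrapolation identities. From \eqref{disA4} one has $u_{k+1}=t_{k+1}x_{k+1}-(t_{k+1}-1)x_k$, and feeding the analogous step-$k$ relation into \eqref{disA1} gives $\bar x_k=\tfrac{1}{t_{k+1}}u_k+\tfrac{t_{k+1}-1}{t_{k+1}}x_k$; subtracting yields the crucial $x_{k+1}-\bar x_k=\tfrac{1}{t_{k+1}}(u_{k+1}-u_k)$, and symmetrically $y_{k+1}-\bar y_k=\tfrac{1}{t_{k+1}}(v_{k+1}-v_k)$. These convert the descent-lemma remainder $\tfrac{L_{f_2}}{2}\|x_{k+1}-\bar x_k\|^2$ into $\tfrac{L_{f_2}}{2t_{k+1}^2}\|u_{k+1}-u_k\|^2$, which after the $t_{k+1}^2$ scaling in $I^1$ becomes the $\tfrac{L_{f_2}}{2}\|u_{k+1}-u_k\|^2$ in the target. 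They also let me reduce the three-term combination of the $f_1$ and linear parts, via Jensen's inequality on \eqref{disA4}, to a single term at $u_{k+1}$.

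Next I would process each block. For the $x$-block I handle $f_2$ and $\langle\lambda^*,A\cdot\rangle$ by the descent lemma and convexity (using \eqref{lip_f2}), and $f_1$ by the subgradient optimality of \eqref{disA2}: convexity of $f_1$ tested against $z=x^*$ gives a three-point inequality whose proximal term, rewritten through \eqref{eq_know}, produces exactly $I^2_k-I^2_{k+1}-\tfrac{1}{2\alpha}\|u_{k+1}-u_k\|^2$. The $\nabla f_2(\bar x_k)$ contributions cancel against the matching linear term in the optimality condition, and combining the proximal and Lipschitz quadratics yields the coefficient $-\tfrac{1-\alpha L_{f_2}}{2\alpha}$. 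The $y$-block is identical except that the strong convexity \eqref{strongC} of $g_1$ supplies an extra $-\tfrac{\mu_g t_{k+1}}{2}\|v_{k+1}-y^*\|^2$; rewriting the proximal term and absorbing $I^3_{k+1}-I^3_k$ (note the weight jump $t_{k+1}^2\to t_{k+2}^2$) produces precisely $\tfrac{t_{k+2}^2-t_{k+1}^2-\beta\mu_g t_{k+1}}{2\beta}\|v_{k+1}-y^*\|^2$ and $-\tfrac{t_{k+1}^2-\beta L_{g_2}}{2\beta}\|v_{k+1}-v_k\|^2$. Each block leaves behind a multiplier cross term, $t_{k+1}\langle\lambda^*-\bar\lambda_{k+1},A(u_{k+1}-x^*)\rangle$ from the $u$-step and $t_{k+1}\langle\lambda^*-\hat\lambda_{k+1},B(v_{k+1}-y^*)\rangle$ from the $v$-step.

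The main obstacle, and the genuinely ADMM-specific part, is reconciling these two cross terms with $I^4_{k+1}-I^4_k$, since the $u$- and $v$-steps are coupled through the different multipliers $\bar\lambda_{k+1}$ and $\hat\lambda_{k+1}$. I would first split $\lambda^*-\hat\lambda_{k+1}=(\lambda^*-\bar\lambda_{k+1})+(\bar\lambda_{k+1}-\hat\lambda_{k+1})$, peeling off the target term $t_{k+1}\langle B(v_{k+1}-y^*),\bar\lambda_{k+1}-\hat\lambda_{k+1}\rangle$. The remaining pieces combine through $A(u_{k+1}-x^*)+B(v_{k+1}-y^*)=Au_{k+1}+Bv_{k+1}-b$ (using $Ax^*+By^*=b$) into $t_{k+1}\langle\lambda^*-\bar\lambda_{k+1},Au_{k+1}+Bv_{k+1}-b\rangle$; adding $I^4_{k+1}-I^4_k=t_{k+1}\langle\lambda_{k+1}-\lambda^*,Au_{k+1}+Bv_{k+1}-b\rangle-\tfrac{1}{2\gamma}\|\lambda_{k+1}-\lambda_k\|^2$ cancels $\lambda^*$, and the dual update $\lambda_{k+1}-\lambda_k=\gamma t_{k+1}(Au_{k+1}+Bv_{k+1}-b)$ turns the sum into $\tfrac1\gamma\langle\lambda_{k+1}-\bar\lambda_{k+1},\lambda_{k+1}-\lambda_k\rangle-\tfrac{1}{2\gamma}\|\lambda_{k+1}-\lambda_k\|^2$. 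A final application of \eqref{eq_know} rewrites this as $\tfrac{1}{2\gamma}\|\lambda_{k+1}-\bar\lambda_{k+1}\|^2-\tfrac{1}{2\gamma}\|\lambda_k-\bar\lambda_{k+1}\|^2$, and dropping the last nonpositive term gives exactly the dual contribution in the statement. Summing the two block estimates with this dual computation yields the lemma.
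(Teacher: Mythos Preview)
Your proposal is correct and follows essentially the same route as the paper: the same extrapolation identities, the same use of Jensen on $f_1,g_1$ via \eqref{disA4}, the descent-type estimate \eqref{lip_f2} for $f_2,g_2$, the three-point identity \eqref{eq_know} for the proximal increments, and the same splitting of the multiplier cross terms through $A(u_{k+1}-x^*)+B(v_{k+1}-y^*)=\tfrac{1}{\gamma t_{k+1}}(\lambda_{k+1}-\lambda_k)$. The only cosmetic difference is in the last step: the paper bounds $\tfrac1\gamma\langle\lambda_{k+1}-\lambda_k,\lambda_{k+1}-\bar\lambda_{k+1}\rangle$ by Young's inequality, whereas you use the polarization identity and drop the nonpositive $-\tfrac{1}{2\gamma}\|\lambda_k-\bar\lambda_{k+1}\|^2$; both give the same final bound.
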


\begin{proof}
Since $f_1$ and $g_1$ are convex functions, we can deduce from  \eqref{disA4} and $t_k\geq 1$ that
	\begin{eqnarray*}\label{eq_le1_1}
	&& f_1(x_{k+1})+g_1(y_{k+1})+\langle \lambda^*,Ax_{k+1}+By_{k+1} - b\rangle\nonumber\\
	&& \qquad\leq\frac{1}{t_{k+1}}(f_1(u_{k+1})+g_1(v_{k+1})+\langle \lambda^*,Au_{k+1}+Bv_{k+1} - b\rangle)\\
	&&\qquad\quad+\frac{t_{k+1}-1}{t_{k+1}}(f_1(x_{k})+g_1(y_{k})+\langle \lambda^*,Ax_{k}+By_{k} - b\rangle).\nonumber
	\end{eqnarray*}
Combining this inequality with $t_{k+1}^2\leq t^2_k + t_{k+1}$, we have
\begin{eqnarray}\label{eq_le1_2}
	I_{k+1}^1-I_k^1	&= &(t_{k+1}(t_{k+1}-1)-t^2_k)(\mathcal{L}(x_k,y_k,\lambda^*)-\mathcal{L}(x^*,y^*,\lambda^*))\nonumber\\
	&& + t^2_{k+1}\mathcal{L}(x_{k+1},y_{k+1},\lambda^*)-t_{k+1}(t_{k+1}-1)\mathcal{L}(x_k,y_k,\lambda^*)- t_{k+1}\mathcal{L}(x^*,y^*,\lambda^*)\\
	&\leq & t_{k+1}\left(f_1(u_{k+1})+g_1(v_{k+1})-f_1(x^*)-g_1(y^*)+\langle \lambda^*,Au_{k+1}+Bv_{k+1} - b\rangle\right)\nonumber\\
	&& +t_{k+1}(f_2(x_{k+1})+g_2(y_{k+1})-f_2(x^*)-g_2(y^*))\nonumber\\
	&&+t_{k+1}(t_{k+1}-1)(f_2(x_{k+1})+g_2(y_{k+1})-f_2(x_k)-g_2(y_k)).\nonumber
\end{eqnarray}
By exploiting the convexity of $f_1$, we have
\[\langle u_{k+1}-x^*, \widetilde{\nabla} f_1(u_{k+1})\rangle \geq f_1(u_{k+1})-f_1(x^*).\]
Then, by similar arguments as \eqref{eq_le2_2} and  $u_{k+1} = x_{k+1} +(t_{k+1}-1)(x_{k+1}-x_{k})$, we have 
\begin{eqnarray}\label{eq_heaaA}
	I_{k+1}^2-I_k^2 &=& -t_{k+1} \langle u_{k+1}-x^*, \widetilde{\nabla} f_1(u_{k+1})+\nabla f_2(\bar{x}_k)+A^T\lambda^*\rangle\nonumber\\
	&&-t_{k+1} \langle u_{k+1}-x^*,A^T(\bar{\lambda}_{k+1}-\lambda^*)\rangle-\frac{1}{2\alpha}\|u_{k+1}-u_k\|^2\\
	&\leq & -t_{k+1}(f_1(u_{k+1})-f_1(x^*)+\langle \lambda^*,A(u_{k+1}-x^*)\rangle)\nonumber\\
	&& -t_{k+1}(f_2(x_{k+1})-f_2(x^*))-t_{k+1}(t_{k+1}-1)(f_2(x_{k+1})-f_2(x_k)) \nonumber\\
	&& -t_{k+1} \langle u_{k+1}-x^*,A^T(\bar{\lambda}_{k+1}-\lambda^*)\rangle-\frac{1-\alpha L_{f_2}}{2\alpha}\|u_{k+1}-u_k\|^2, \nonumber
\end{eqnarray}	
Similarly, from \eqref{eq_le2_3}, we get
\begin{eqnarray}\label{eq_le1_8}
&&	I_{k+1}^3-I_k^3	 = \frac{t^2_{k+2}}{2\beta}\|v_{k+1}-y^*\|^2-\frac{t^2_{k+1}}{2\beta}\|v_{k}-y^*\|^2\nonumber\\
	&&\qquad\ \ =\frac{t^2_{k+2}-t^2_{k+1}}{2\beta}\|v_{k+1}-y^*\|^2+ \frac{t^2_{k+1}}{\beta}(\langle v_{k+1}-y^*, v_{k+1}-v_k\rangle-\frac{1}{2}\|v_{k+1}-v_k\|^2)\nonumber\\
&&\qquad\overset{\eqref{disA3}}{=} \frac{t^2_{k+2}-t^2_{k+1}}{2\beta}\|v_{k+1}-y^*\|^2-t_{k+1} \langle v_{k+1}-y^*, \widetilde{\nabla} g_1(v_{k+1}) + {\nabla} g_2(\bar{y}_{k})+B^T\lambda^*\rangle\nonumber\\
	&&\qquad\qquad-t_{k+1} \langle v_{k+1}-y^*,B^T(\hat{\lambda}_{k+1}-\lambda^*)\rangle-\frac{t^2_{k+1}}{2\beta}\|v_{k+1}-v_k\|^2\\
	&&\qquad\overset{\eqref{strongC}}{\leq}  \frac{t^2_{k+2}-t^2_{k+1}-\beta\mu_g t_{k+1}}{2\beta}\|v_{k+1}-y^*\|^2\nonumber\\
	&&\qquad\qquad -t_{k+1}(g_1(v_{k+1})-g_1(y^*)+\langle \lambda^*,B(v_{k+1}-y^*)\rangle)\nonumber\\
	&&\qquad\qquad- t_{k+1}(g_2(y_{k+1})-g_2(y^*))-t_{k+1}(t_{k+1}-1)(g_2(y_{k+1})-g_2(y_k))\nonumber\\
	&& \qquad\qquad-t_{k+1} \langle v_{k+1}-y^*,B^T(\hat{\lambda}_{k+1}-\lambda^*)\rangle-\frac{t^2_{k+1}-\beta L_{g_2}}{2\beta}\|v_{k+1}-v_k\|^2.\nonumber
\end{eqnarray}	

Combining \eqref{eq_le1_10} and \eqref{eq_le1_2} - \eqref{eq_le1_8}, we obtain
\begin{eqnarray*}
	\mathcal{E}_{k+1}-\mathcal{E}_k &=& I_{k+1}^1-I_k^1+I_{k+1}^2-I_k^2+I_{k+1}^3-I_k^3+I_{k+1}^4-I_k^4\nonumber\\
	&\leq& \frac{t^2_{k+2}-t^2_{k+1}-\beta\mu_g t_{k+1}}{2\beta}\|v_{k+1}-y^*\|^2-\frac{1-\alpha L_{f_2}}{2\alpha}\|u_{k+1}-u_k\|^2\nonumber\\
	&&-\frac{t^2_{k+1}-\beta L_{g_2}}{2\beta}\|v_{k+1}-v_k\|^2-\frac{1}{2\gamma}\|\lambda_{k+1}-\lambda_k\|^2\\
	&&+t_{k+1}(\langle A(u_{k+1}-x^*),\lambda_{k+1}-\bar{\lambda}_{k+1}\rangle+\langle B(v_{k+1}-y^*),\lambda_{k+1}- \hat{\lambda}_{k+1}\rangle).\nonumber
\end{eqnarray*}
This together \eqref{eq_le1_12} yields the desired result.
\end{proof}

\subsection{Convergence analysis}
Given that Algorithm \ref{al_1A} corresponds to scheme \eqref{disA} with 	$\bar{\lambda}_{k+1} = \lambda_k+ \gamma t_{k+1}(Au_{k+1}+Bv_{k}-b)$ and $\hat{\lambda}_{k+1} =  {\lambda}_{k+1}$, while Algorithm \ref{al_1B} corresponds to scheme \eqref{disA} with  $\bar{\lambda}_{k+1}=\hat{\lambda}_{k+1} = \lambda_k+ \gamma t_{k+1}(Au_{k+1}+Bv_{k}-b)$, we can apply Lemma \ref{le_A1} and similar arguments as those in Theorem \ref{th_B1} and Theorem \ref{th_B2}. This leads to the following convergence results for Algorithm \ref{al_1A} and Algorithm \ref{al_1B}.

\begin{theorem}\label{th_A1}
Suppose that Assumption \ref{ass} holds. Let $\{(x_k,y_k)\}_{k \geq 1}$ be the sequence generated by Algorithm \ref{al_1A} and  $(x^*,y^*,\lambda^*)\in\Omega$.  Assume that  $\alpha\leq \frac{1}{L_{f_2}}$, $\beta\leq \frac{t_1^2}{L_{g_2}}$, and $\{t_k\}_{k\geq 1}\subseteq [1,+\infty)$ is a nondecreasing sequence satisfying
\begin{eqnarray*}
	t_{k+1}^2\leq t^2_k + t_{k+1}  \quad\text{ and }\quad
	t^2_{k+1}\leq t^2_{k}+\frac{\beta\mu_g}{1+\beta\gamma\|B\|^2} t_{k}.
\end{eqnarray*}
Then the following conclusions hold:
\begin{eqnarray*}
&& \mathcal{L}(x_k,y_k,\lambda^*)-\mathcal{L}(x^*,y^*,\lambda^*)\leq \frac{\mathcal{E}_{1}+\frac{\gamma t^2_{2}}{2}\|B(y_{1}-y^*)\|^2}{t^2_k},\\
&&  \|Ax_{k}+By_{k}-b\|\leq  \frac{C_1}{t_{k}^2},\\
&& |f(x_k)+g(y_k)-f(x^*)-g(y^*)|\leq \frac{\mathcal{E}_{1}+\frac{\gamma t^2_{2}}{2}\|B(y_{1}-y^*)\|^2+C_1\|\lambda^*\|}{t_{k}^2},
\end{eqnarray*}
where $\mathcal{E}_{1}$ is defined in \eqref{E_1} and $C_1$ is defined in \eqref{C_1}.
\end{theorem}

\begin{theorem}\label{th_A2}
Suppose that Assumption \ref{ass} holds. Let $\{(x_k,y_k)\}_{k \geq 1}$ be the sequence generated by Algorithm \ref{al_1B} and  $(x^*,y^*,\lambda^*)\in\Omega$. Assume that $\alpha\leq \frac{1}{L_{f_2}}$, $\beta\leq \frac{t_1^2}{L_{g_2}+t_1^2\gamma\|B\|^2}$, and $\{t_k\}_{k\geq 1}\subseteq [1,+\infty)$ is a nondecreasing sequence satisfying
\begin{eqnarray*}
	t_{k+1}^2\leq t^2_k + t_{k+1}\quad\text{ and }\quad t^2_{k+1}\leq t^2_{k}+\beta\mu_g t_{k}.
\end{eqnarray*}
Then the following conclusions hold:
\begin{eqnarray*}
&& \mathcal{L}(x_k,y_k,\lambda^*)-\mathcal{L}(x^*,y^*,\lambda^*)\leq \frac{\mathcal{E}_{1}}{t^2_k},\\
&&  \|Ax_{k}+By_{k}-b\|\leq  \frac{C_2}{t_{k}^2},\\
&& |f(x_k)+g(y_k)-f(x^*)-g(y^*)|\leq \frac{\mathcal{E}_{1}+C_2\|\lambda^*\|}{t_{k}^2},
\end{eqnarray*}
where $\mathcal{E}_{1}$ is defined in \eqref{E_1} and $C_2$ is defined in \eqref{C_2}.
\end{theorem}

\begin{remark}
Note that the update of $v_{k+1}$ in Algorithm \ref{al_1B}  is  calculated with the help of the  proximal operator.  When slight adjustments are made to Algorithm \ref{al_1A}, the augmented term can also be linearized by the proximal technique. Specifically, the update of $v_{k+1}$ can be expressed as:
\begin{eqnarray*}
	v_{k+1} &=&\mathop{\arg\min}_v\left(g_1(v)+\langle B^T\lambda_k+\nabla g_2(\bar{y}_k),v\rangle+\frac{\gamma t_{k+1}}{2}\|Au_{k+1}+Bv-b\|^2\right.\\
	&&\left.\qquad\qquad+\frac{t_{k+1}}{2\beta}\left\|v-v_k\right\|^2_{\boldsymbol{Q}}\right),
\end{eqnarray*}
where $Q= \beta(\eta Id_n - \gamma B^TB)$ and $\eta\geq \gamma \|B\|^2$. Then 
\[ v_{k+1} ={\bf Prox}_{\frac{1}{\eta t_{k+1}},g_1}\left(\frac{1}{\eta}\left(\frac{1}{\beta}Qv_k-\gamma B^T(Au_{k+1}-b)-\frac{1}{t_{k+1}}(B^T\lambda_k+\nabla g_2(\bar{y}_k))\right)\right).\]
In this case, \eqref{disA2} changes as
\[\boldsymbol{Q}(v_{k+1}-v_k) = -\boldsymbol{\frac{\beta}{t_{k+1}}}(\widetilde{\nabla} g_1(\boldsymbol{v_{k+1}})+\nabla g_2(\bar{y}_k) +B^T\hat{\lambda}_{k+1}).\]
By replacing $I_3^k$ in \eqref{energy_sub} with $I_3^k=\frac{t^2_{k+1}}{2\beta}\|v_k-y^*\|_Q^2$, we can easily achieve the same convergence rate as in Theorem \ref{al_1A} under suitable parameter settings.
\end{remark}

\begin{remark}
By observing Algorithm \ref{al_2A}, Algorithm \ref{al_2B}, Algorithm \ref{al_1A} and Algorithm \ref{al_1B}, and the proofs of their convergence rate results, we can modify them to create new algorithms.
Specifically,  we can modify the updates of $(x_{k+1},u_{k+1})$ and $(y_{k+1},v_{k+1})$ in the proposed algorithms by the following two schemes: 

{\bf Scheme 1:} Update $x_{k+1}$ by \eqref{updata_uB1} and $u_{k+1}=x_{k+1}+(t_{k+1}-1)(x_{k+1}-x_k)$.  Update $v_{k+1}$ by \eqref{updata_vA1} or \eqref{updata_vA2} and 
     $y_{k+1} = \frac{1}{t_{k+1}}v_{k+1}+ \frac{t_{k+1}-1}{t_{k+1}}y_{k}$.

{\bf Scheme 2:} Update $u_{k+1}$ by \eqref{updata_uA1} and  $x_{k+1} = \frac{1}{t_{k+1}}u_{k+1}+ \frac{t_{k+1}-1}{t_{k+1}}x_{k}$.  Update $y_{k+1}$ by \eqref{updata_vB1} or  \eqref{updata_vB2} and 
      $v_{k+1}=y_{k+1}+(t_{k+1}-1)(y_{k+1}-y_k)$.
      
 The modified algorithms can achieve  non-ergodic convergence rates of  $O(1/t^2_k)$ through arguments similar to those in the proofs of   Lemma \ref{le_B1}, Lemma \ref{le_A1},  Theorem \ref{th_A1}, and Theorem \ref{th_A2}.
\end{remark}

\subsection{Reduction to accelerated linearized ALM based on Nesterov's second scheme}

Through a similar discussion as in Section \ref{sec24}, from Algorithm \ref{al_1A} and Algorithm \ref{al_1B}  we can get the following two accelerated linearized ALMs and the corresponding convergence  results.

\begin{algorithm}
        \caption{Accelerated Linearized ALM-Nesterov's Second Scheme (I) for Problem \eqref{one_bolck}}
        \label{al_1B1}
        {\bf Initialize:} Let $x_1=x_0\in\mathbb{R}^m, \lambda_1\in\mathbb{R}^p$, $\alpha>0,\gamma>0$, $t_1\geq 1$. \\
        \For{$k = 1, 2,\cdots$}{
        $\bar{x}_k = x_k + \frac{t_k-1}{t_{k+1}}(x_k-x_{k-1})$.\\
      $u_{k+1} = \mathop{\arg\min}_u\left(f_1(u)+\langle A^T\lambda_k+\nabla f_2(\bar{x}_k),u\rangle+\frac{\gamma t_{k+1}}{2}\|Au-b\|^2+\frac{1}{2\alpha t_{k+1}}\left\|u-u_k\right\|^2\right)$. \\
     $ x_{k+1} = \frac{1}{t_{k+1}} u_{k+1}+ \frac{t_{k+1}-1}{t_{k+1}}x_{k}$.\\
		$\lambda_{k+1} = \lambda_k + \gamma t_{k+1}(Au_{k+1}-b)$.
}
\end{algorithm}

\begin{theorem}\label{th_A11}
Suppose that $f_1$ is a closed and convex function, $f_2$ is convex and  has a Lipschitz continuous gradient with constant $L_{f_2}$, and the saddle points set $\Omega$ is nonempty. Let $\{x_k\}_{k \geq 1}$ be the sequence generated by Algorithm \ref{al_1B1} and  $(x^*,\lambda^*)\in\Omega$.  Assume that  $\alpha\leq \frac{1}{L_{f_2}}$ and $\{t_k\}_{k\geq 1}\subseteq [1,+\infty)$ is a nondecreasing sequence satisfying
\begin{eqnarray*}
	t_{k+1}^2\leq t^2_k + t_{k+1}.
\end{eqnarray*}
Then the following conclusions hold:
\begin{eqnarray*}
&&  \|Ax_{k}-b\|\leq  \frac{C_3}{t_{k}^2},\\
&& |f(x_k)-f(x^*)|\leq \frac{\mathcal{E}_{1}+C_3\|\lambda^*\|}{t_{k}^2},
\end{eqnarray*}
where $\mathcal{E}_{1}$ and $C_3$ are defined same as in Theorem \ref{th_B11}.

\end{theorem}

\begin{remark}
	When $A=0$ and $b=0$, Algorithm \ref{al_1B1} reduces to Nesterov's second scheme \eqref{NestSS} for solving unconstrained composite convex problems. The convergence results in Theorem \ref{th_A11} are also consistent with the convergence results of Nesterov's second scheme \eqref{NestSS} in \cite{Tseng2008,LiML2019ACC}.
\end{remark}

\begin{algorithm}
        \caption{Accelerated Linearized ALM-Nesterov's Second Scheme (II) for Problem \eqref{one_bolck_two}}
        \label{al_1B2}
        {\bf Initialize:} Let $y_1=y_0\in\mathbb{R}^n, \lambda_1\in\mathbb{R}^p$, $\beta>0,\gamma>0$, $t_1\geq 1$.  \\
        \For{$k = 1, 2,\cdots$}{
        $\bar{y}_k = y_k + \frac{t_k-1}{t_{k+1}}(y_k-y_{k-1})$.\\
      $\bar{\lambda}_{k+1} =\lambda_k+ \gamma t_{k+1}(Bv_{k}-b)$.\\
      $v_{k+1} ={\bf Prox}_{\frac{\beta}{t_{k+1}},g_1}\left(v_k-\frac{\beta}{t_{k+1}}(B^T\bar{\lambda}_{k+1}+\nabla g_2(\bar{y}_k)\right) $. \\
     $ y_{k+1} = \frac{1}{t_{k+1}} v_{k+1}+ \frac{t_{k+1}-1}{t_{k+1}}y_{k}$.\\
		$\lambda_{k+1} = \lambda_k + \gamma t_{k+1}(Bv_{k+1}-b)$.
}
\end{algorithm}

\begin{theorem}\label{th_A22}
Suppose that $g_1$ is a closed and $\mu_g$-strongly convex function, $g_2$ is convex has a Lipschitz continuous gradient with constant $L_{g_2}$, and the saddle points set $\Omega$ is nonempty. Let $\{y_k\}_{k \geq 1}$ be the sequence generated by Algorithm \ref{al_1B2} and  $(y^*,\lambda^*)\in\Omega$. Assume that  $\beta\leq \frac{t_1^2}{L_{g_2}+t_1^2\gamma\|B\|^2}$ and $\{t_k\}_{k\geq 1}\subseteq [1,+\infty)$ is a nondecreasing sequence satisfying
\begin{eqnarray*}
	t_{k+1}^2\leq t^2_k + t_{k+1}\quad\text{ and }\quad t^2_{k+1}\leq t^2_{k}+\beta\mu_g t_{k}.
\end{eqnarray*}
Then the following conclusions hold:
\begin{eqnarray*}
&&  \|By_{k}-b\|\leq  \frac{C_4}{t_{k}^2},\\
&& |g(y_k)-g(y^*)|\leq \frac{\mathcal{E}_{1}+C_4\|\lambda^*\|}{t_{k}^2},
\end{eqnarray*}
where $\mathcal{E}_{1}$ and $C_4$ are defined same as in Theorem \ref{th_B22}.
\end{theorem}

\begin{remark}
	Incorporating the appropriate parameter selection, along with the use of Lemma \ref{le_appt1} and Lemma \ref{le_appt2}, it is straightforward to obtain non-ergodic $\mathcal{O}(1/k^2)$ convergence rate results for both Algorithm \ref{al_1B1} and Algorithm \ref{al_1B2}. From Algorithm \ref{al_1B2}, we can see  that when the objective function is strongly convex, allowing subproblems to be solved via the proximal operator, it enables efficient solutions to certain specific problems. In the convex case, the $\mathcal{O}(1/k^2)$ convergence rate of Algorithm \ref{al_1B1} is consistent with the convergence results of the accelerated linearized ALM in \cite{Xu2017}, which also employs Nesterov's second scheme \eqref{NestSS} within a primal-dual framework.	
	In the strongly convex case, the $\mathcal{O}(1/k^2)$ convergence rate of Algorithm \ref{al_1B2} with proximal calculation of $g_1$ is consistent with the convergence results of the accelerated linearized ALM in \cite{Sabach2022}.
	\end{remark}

\section{Connections to existing accelerated methods}\label{sec5}

In this section, we explore  the connection between our accelerated linearized ADMMs and existing accelerated methods. Building upon the discussion in Section \ref{sec_para} and the insights provided by Corollary \ref{cor_para}, it is evident that with a suitable selection of $\alpha, \beta, \gamma, t_1$, 
all the convergence results of our proposed methods are guaranteed when the sequence $t_k$ satisfies the condition:
\[t_{k+1}^2\leq t^2_k + t_{k+1} \quad \forall k\geq 1.\]

In the following, we present several examples of accelerated methods with Nesterov extrapolation and compare them with our accelerated methods. It is worth noting that various choice rules  of $t_k$ illustrated in these examples can also be used in our accelerated linearized ADMMs.

\noindent {\bf Example 1: (He-Hu-Fang scheme for  one-block problem \eqref{one_bolck})}

Under the rule
\[t_k = 1+\frac{k-2}{\alpha-1},\quad \forall k\geq 1\]
with $\alpha\geq 3$, He et al. \cite{HeNA}  proposed  the following accelerated linearized primal-dual method\footnote{By variable substitution, we eliminate the intermediate variable $\bar{\lambda}_k$. This will not have any effect on the convergence results of the paper.} for problem \eqref{one_bolck}: 
\begin{numcases}{}
         \bar{x}_k = x_k+\frac{t_k-1}{t_{k+1}}(x_k-x_{k-1}),\nonumber \\
		x_{k+1} = \mathop{\arg\min}_x f_1(x)+\langle \nabla f_2(\bar{x}_k)+A^T\lambda_k,x\rangle,\nonumber\\
		\qquad\qquad+\frac{st_{k+1}(t_{k+2}-1)}{2}\|A(x-x_k)+\frac{1}{t_{k+1}}(Ax_k-b)\|^2 + \frac{t_{k+1}}{2s(t_{k+2}-1)}\|x-\bar{x}_k\|^2,\label{Hena}\\
		u_{k+1} = x_{k+1}+(t_{k+1}-1)(x_{k+1}-x_k),\nonumber\\
	 \lambda_{k+1} = \lambda_k + s(t_{k+2}-1)(Au_{k+1}-b),\nonumber
 \end{numcases}
 and obtained non-ergodic $\mathcal{O}(1/k^2)$ convergence rates of  the objective residual and the feasibility violation, see \cite[Theorem 2]{HeNA}. This method can be equivalently expressed as 
\begin{equation*}
	\begin{cases}
		 \bar{x}_k = x_k+\frac{t_k-1}{t_{k+1}}(x_k-x_{k-1}),\\
		u_{k+1}-u_k = -{\alpha \boldsymbol{\eta_k} t_{k+1}}(\widetilde{\nabla} f_1({x_{k+1}})+\nabla f_2(\bar{x}_k)+{A^T{\lambda}_{k+1}}),\\
		u_{k+1} = x_{k+1}+(t_{k+1}-1)(x_{k+1}-x_k),\\
		\lambda_{k+1} = \lambda_k + \gamma  \boldsymbol{\eta_k} t_{k+1}(Au_{k+1}-b)
	\end{cases}
\end{equation*}
with rescaling  $\eta_k = \frac{t_{k+2}-1}{t_{k+1}}$ and suitable $\gamma$. We can verify that
 \[t_{k+1}^2-t^2_k-t_{k+1}=\frac{(3-\alpha)k-(\alpha-5/2)^2-3/4}{(\alpha-1)^2}\leq 0. \]
Hence,  algorithm \eqref{Hena} can be considered as a rescaled variant of Algorithm \ref{al_2B1}, where  the rescaling coefficient is $\eta_k$.

In the case of smooth functions, Bo{\c t} et al. \cite{BotMP2022} also considered the rule 
\[ t_{k+1}^2\leq t^2_k +\eta t_{k+1} \text{ with } \eta\leq 1\]
for the choice of $t_k$.

\begin{remark}
	 Consider the following rescaling version of scheme \eqref{disB} for problem \eqref{ques}:
\begin{subequations}\label{disA_res}
\begin{numcases}{}
        (\bar{x}_k, \bar{y}_k) = (x_k,y_k) + \frac{t_k-1}{t_{k+1}}[(x_k,y_k)-(x_{k-1},y_{k-1})],\nonumber \\
		u_{k+1}-u_k = -\boldsymbol{\alpha\eta_k t_{k+1}}(\widetilde{\nabla} f_1(\boldsymbol{x_{k+1}})+\nabla f_2(\bar{x}_k)+{A^T\bar{\lambda}_{k+1}}),\nonumber\\
		v_{k+1}-v_k = -\boldsymbol{\frac{\beta\eta_k}{t_{k+1}}}(\widetilde{\nabla} g_1(\boldsymbol{y_{k+1}})+\nabla g_2(\bar{y}_k) +{B^T{\hat{\lambda}}_{k+1}}),\nonumber\\
		(x_{k+1},y_{k+1}) = \frac{1}{t_{k+1}}(u_{k+1},v_{k+1})+ \frac{t_{k+1}-1}{t_{k+1}}(x_{k},y_k),\nonumber\\
		\lambda_{k+1} = \lambda_k + \boldsymbol{\gamma\eta_k} t_{k+1}(Au_{k+1}+Bv_{k+1}-b),\nonumber
 \end{numcases}
\end{subequations}
where $\eta_k>0$ is the rescaling coefficient with $\lim_{k\to+\infty}\eta_k=1$.
Define the  rescaling energy sequence of  $\mathcal{E}_k$ as 
\begin{eqnarray*}
	\mathcal{E}_k &=&\eta_{k-1}t^2_k(\mathcal{L}(x_k,y_k,\lambda^*)-\mathcal{L}(x^*,y^*,\lambda^*))+\frac{1}{2\alpha}\|u_k-x^*\|^2+\frac{t_{k+1}^2}{2\beta}\|v_k-x^*\|^2+\frac{1}{2\gamma}\|\lambda_k-\lambda^*\|^2.
\end{eqnarray*}
By  arguments similar as in  Lemma \ref{le_A1} and Theorem \ref{th_A1}, we can easily obtain the $\mathcal{O}\left(\frac{1}{\eta_k t_k^2}\right)$ convergence rate under the assumption
\[\eta_k t_{k+1}^2\leq \eta_{k-1}t_k^2+\eta_k t_{k+1}.\]
Since  $\lim_{k\to+\infty}\eta_k=1$, this situation is not essentially different from \eqref{disB}, and so we will not do in-depth research.
\end{remark}

\noindent {\bf Example 2: (Xu scheme for one-block problem \eqref{one_bolck})}

Under the rule
\[t_k = \frac{k}{2},\quad \forall k\geq 1,\]
which  yields $t_{k+1}^2-t^2_k-t_{k+1}=-\frac{1}{4} \leq 0$, Xu \cite{Xu2017} proposed  the following accelerated linearized ALM for  for problem \eqref{one_bolck}: 
\begin{numcases}{}
         \bar{x}_k = x_k+\frac{t_k-1}{t_{k+1}}(x_k-x_{k-1}),\nonumber \\
		u_{k+1} = \mathop{\arg\min}_u\left(f_1(u)+\langle A^T\lambda_k+\nabla f_2(\bar{x}_k),u\rangle+\frac{\gamma t_{k}}{2}\|Au-b\|^2+\frac{1}{2\alpha t_{k}}\left\|u-u_k\right\|^2\right),\nonumber\\
		x_{k+1}= \frac{1}{t_{k+1}}u_{k+1}+ \frac{t_{k+1}-1}{t_{k+1}}x_{k}, \nonumber\\
	 \lambda_{k+1} = \lambda_k + \gamma t_{k}(Au_{k+1}-b),\nonumber
 \end{numcases}
 This method can be seen as a rescaled version of Algorithm \ref{al_1B1}, where the  rescaling coefficient is $\eta_k = \frac{t_k}{t_{k+1} }$. The non-ergodic convergence rate of $\mathcal{O}(1/k^2)$ established in \cite[Theorem 2.9]{Xu2017} is consistent with the results obtained in this approach.

\noindent {\bf Example 3: (Sabach-Teboulle scheme for  one-block problem \eqref{one_bolck_two})}

Under the rule 
\[ t_{k+1}=\frac{1+\sqrt{1+4t_k^2}}{2},\]
Sabach and Teboulle \cite{Sabach2022} proposed the  unifying and simplifying faster Lagrangian-based method for problem \eqref{ques} with  $f=0$ and $A=0$:
\begin{numcases}{}
         \bar{\lambda}_k = \lambda_k+\alpha t_k(t_k-1)(By_k-b),\nonumber \\
		v_{k+1} \in \textbf{Prim}_{t_{k+1}}(v_k,\bar{\lambda}_k),  \nonumber
		\\
		y_{k+1}= \frac{1}{t_{k+1}}v_{k+1}+ \frac{t_{k+1}-1}{t_{k+1}}y_{k},\nonumber\\
	 \lambda_{k+1} = \lambda_k + \gamma t_{k}(Bv_{k+1}-b),\nonumber
 \end{numcases}
where $\textbf{Prim}_{t}(\cdot)$ denotes a nice primal algorithmic map, like the $v_k$ update in Algorithm \ref{al_1A} and Algorithm \ref{al_1B}. It was proven in \cite[Theorem 4.4]{Sabach2022} that this method achieves a  non-ergodic convergence rate of $\mathcal{O}(1/k^2)$ under the strong convexity assumption of the function $g$, even when  the augmented terms are linearized. This framework aligns with our accelerated linearized ALM  based on Nesterov's second scheme, as outlined in Algorithm \ref{al_1B2}.

 \noindent {\bf Example 4: (Tseng scheme for unconstrained problem \eqref{tbl-f1})}

 Tseng \cite{Tseng2008}  considered  Nesterov's second scheme \eqref{NestSS} under the rule
\[t_{k} = \frac{k+1}{2}\]
and 
\[ t_{k+1}=\frac{1+\sqrt{1+4t_k^2}}{2}.\]
It was proven in \cite[Corollary 1]{Tseng2008} (also see \cite[Theorem 2.3]{LiML2019ACC}) that this method achieves a  non-ergodic convergence rate of $\mathcal{O}(1/k^2)$.

 \noindent {\bf Example 5: (Chambolle-Dossal scheme for unconstrained problem \eqref{tbl-f1})}

Chambolle and Dossal in \cite{Chambolle}  considered  Nesterov's second scheme \eqref{FISTA} under the rule
	\[t_k=1+\frac{k-1}{\alpha-1},\quad  \forall k\geq 1\]
with $\alpha\geq 3$. As a result,
	\begin{eqnarray*}
		t_{k+1}^2-t_k^2\leq\frac{2}{\alpha-1}t_{k+1}\leq t_{k+1}.
	\end{eqnarray*}
The non-ergodic  $\mathcal{O}(1/k^2)$ convergence rate of  Nesterov's first scheme \eqref{FISTA} was shown in \cite[Theorem 3.2]{Chambolle}.

 \noindent {\bf Example 6: (Attouch-Cabot scheme for unconstrained problem \eqref{tbl-f1})}
	
Attouch and Cabot \cite{Attouch2018} considered Nesterov's second scheme  \eqref{FISTA} under the rule
	\[t_k=\frac{k-1}{\alpha-1},\quad  \forall k\geq 1\]
	with $\alpha\geq 3$.  Hence
	\begin{eqnarray*}
		t_{k+1}^2- t_k^2-t_{k+1}=\frac{(3-\alpha)k-1}{(\alpha-1)^2}\leq 0.
	\end{eqnarray*}
They showed in \cite[Corollary 17]{Attouch2018} the non-ergodic  $\mathcal{O}(1/k^2)$ convergence rate of  Nesterov's first scheme  \eqref{FISTA}.

 \noindent {\bf Example 7: (Beck-Teboulle scheme for unconstrained problem \eqref{tbl-f1})}

By selecting 
\[ t_{k+1}=\frac{1+\sqrt{1+4t_k^2}}{2},\]
which ensure that
\[ 	t_{k+1}^2= t_k^2+ t_{k+1}.\]
This transformation effectively turns Nesterov's first scheme \eqref{FISTA} into the well-known FISTA method, originally proposed by Beck and Teboulle in  \cite{BeckIma}.

\section{Numerical experiments}
In this section, we present two numerical experiments aimed at validating the performance of the proposed algorithms. The parameter settings of all algorithms in the experiments satisfy the parameter assumptions of the theoretical convergence rates. All codes are implemented using Python 3.8 on a MacBook laptop equipped with  Intel Core i5 CPU running at 2.30GHz and 8 GB of memory.

\subsection{Elastic net problem}

Let's explore the elastic net problem:
\begin{equation*}
	\min_{x\in\mathbb{R}^n} P(x)=\|x\|_1+\frac{\mu}{2}\|x\|^2+\frac{\eta}{2}\|Mx-b\|^2,
\end{equation*}
where  $\mu,\eta > 0$. With the constraint $x = y$, the  elastic net problem can be reformulated as:
\begin{eqnarray}\label{ENP}
	\min_{x,y} &\ F(x,y)= \|y\|_1+\frac{\mu}{2}\|y\|^2+\frac{\eta}{2}\|Mx-b\|^2,  \nonumber\\
		s.t.& \ x -y =0.
\end{eqnarray}
Certainly, problem \eqref{ENP} is equivalent to problem \eqref{ques} with $g(y) = g_1(y)=\|y\|_1+\frac{\mu}{2}\|y\|^2$,  which is  $\mu$-strongly convex, $f(x)=f_1(x) = \frac{\eta}{2}\|Mx-b\|^2$ and $A=I_n, B=-I_n$. 

Let's set $\mu=0.1$, $\eta =1$, $m=500$, and $n=1000$. We generate matrix $M$ from a Gaussian distribution. The vector $\bar{x}$ consists of $50$ non-zero elements generated uniformly from $[-10, 10]$. The observed vector $b$ is obtained as $b=M\bar{x}+\omega$, where $\omega \in \mathbb{R}^m$ is Gaussian noise with variance $\sigma^2=1e-4$. We proceed to compare the performance of our accelerated linearized ADMMs, the accelerated linearized ADMM presented in \cite{Xu2017}, and the new primal-dual algorithm introduced in \cite{Tran2018}.

Regarding our proposed algorithm:
\begin{itemize}
	\item[$\bullet$] Algorithm \ref{al_2A} - Accelerated linearized ADMM-Nesterov's first scheme (I) (ALADMM-F-I).
	\item[$\bullet$] Algorithm \ref{al_2B} - Accelerated linearized ADMM-Nesterov's first scheme (II) (ALADMM-F-II).
	\item[$\bullet$] Algorithm \ref{al_1A} - Accelerated linearized ADMM-Nesterov's Second scheme (I) (ALADMM-S-I).
	\item[$\bullet$] Algorithm \ref{al_1B} - Accelerated linearized ADMM-Nesterov's Second scheme (II) (ALADMM-S-II).
\end{itemize}
We set the parameters as $\alpha=100$, $\beta=\gamma=1$, and update $t_k$ using \eqref{max_tk} with $t_0=1$. For the compared algorithms, the parameters are set as follows:
\begin{itemize}
	\item[$\bullet$] Accelerated linearized ADMM in \cite[Algorithm 2]{Xu2017} (ALADMM): $\beta_k=\gamma_k=\frac{k+1}{20}, P^k=0, Q^k=0$.
	\item[$\bullet$] The new primal-dual  algorithm in\cite[Scheme (39)]{Tran2020} (New-PDA): $\gamma=0.999$, $\rho_0^{1+}=\frac{5\Gamma}{2}$.
\end{itemize}
It's worth noting that under Assumption \ref{ass}, all of our proposed algorithms enjoy $O(1/k^2)$ convergence rates. The ALADMM achieves $O(1/k)$ convergence rate for the iterative sequence and $O(1/k^2)$ convergence rate for the ergodic iterative sequence. The New-PDA achieves the $O(1/k^2)$ convergence rate. 

Fig. \ref{fig1} displays the results regarding ``Iterations''  (the number of iterations) and ``CPU'' (CPU time in seconds) for all algorithms. Observing from Fig. \ref{fig1}, it's evident that our proposed algorithms demonstrate superior numerical performance, competing effectively with ALADMM and New-PDA across various metrics such as composite objective residual, objective residual, and feasibility violation. This is supported by the indication of fewer iterations and less CPU time.

\begin{figure}[htbp]
\centering
\subfigure[The number of iterations]{
\begin{minipage}[t]{0.48\linewidth}
\centering
\includegraphics[width=3in]{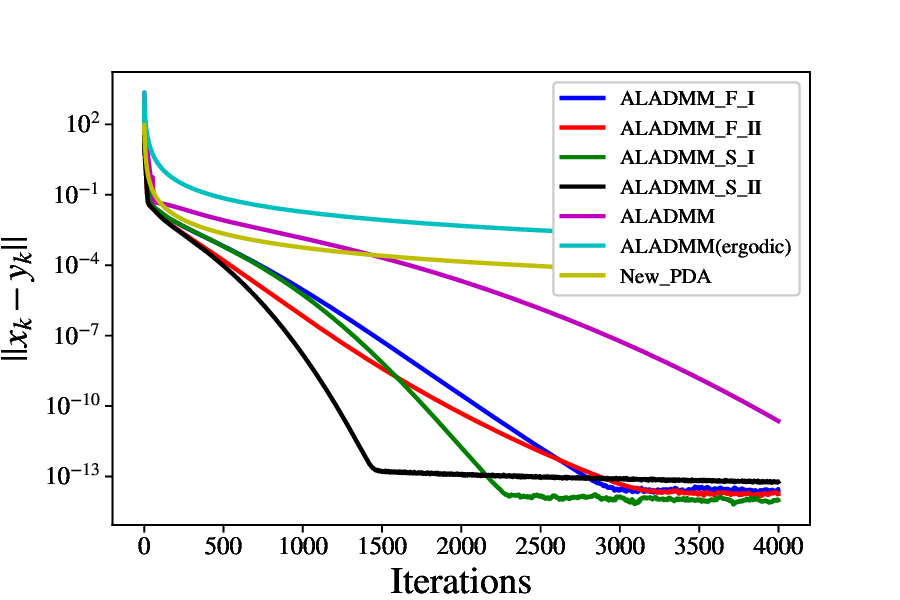}
\centering
\includegraphics[width=3in]{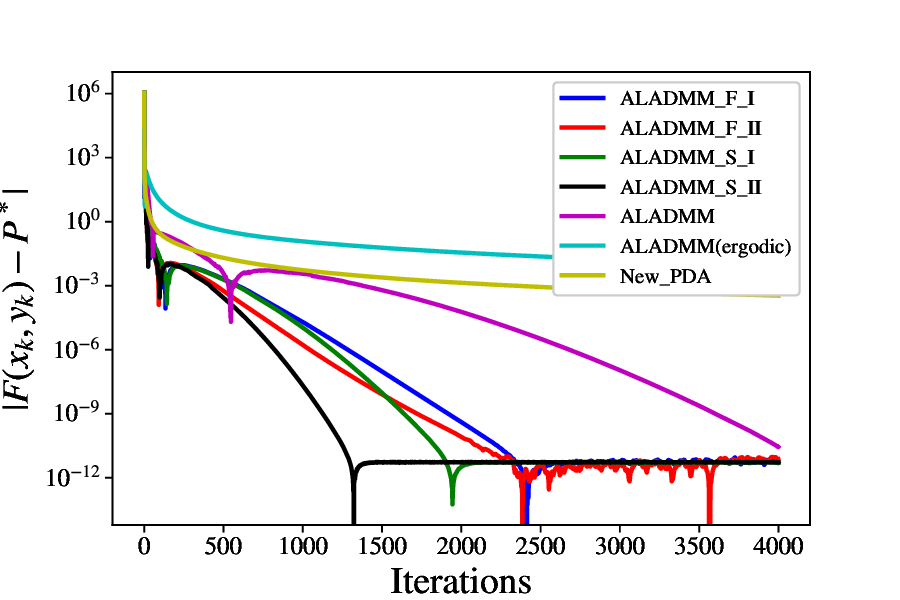}
\centering
\includegraphics[width=3in]{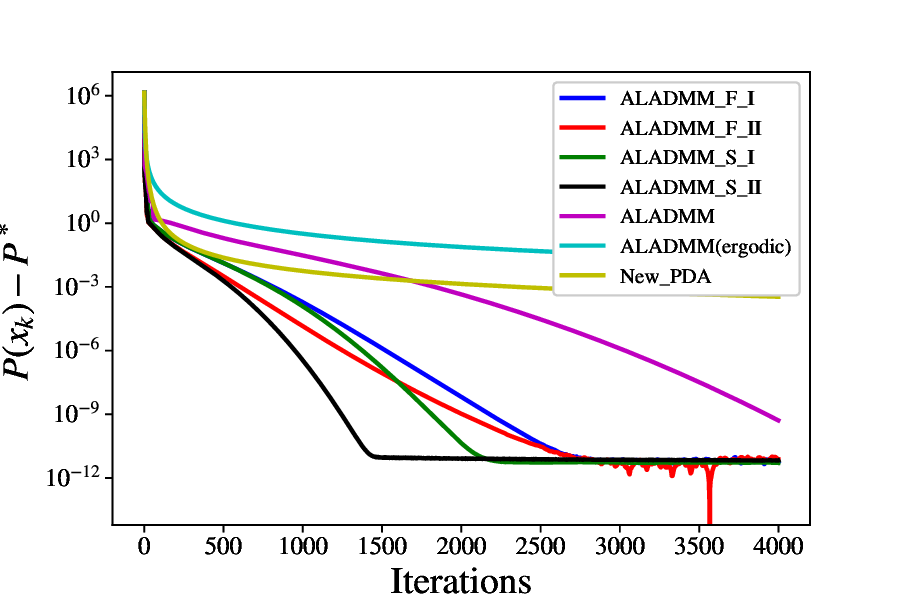}
\end{minipage}%
}%
\subfigure[CPU time in seconds]{
\begin{minipage}[t]{0.48\linewidth}
\centering
\includegraphics[width=3in]{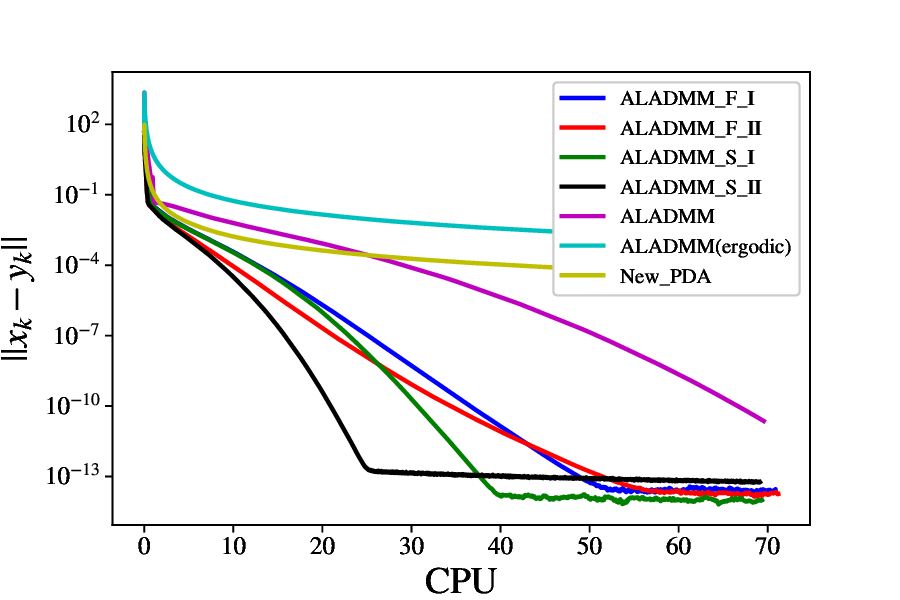}
\centering
\includegraphics[width=3in]{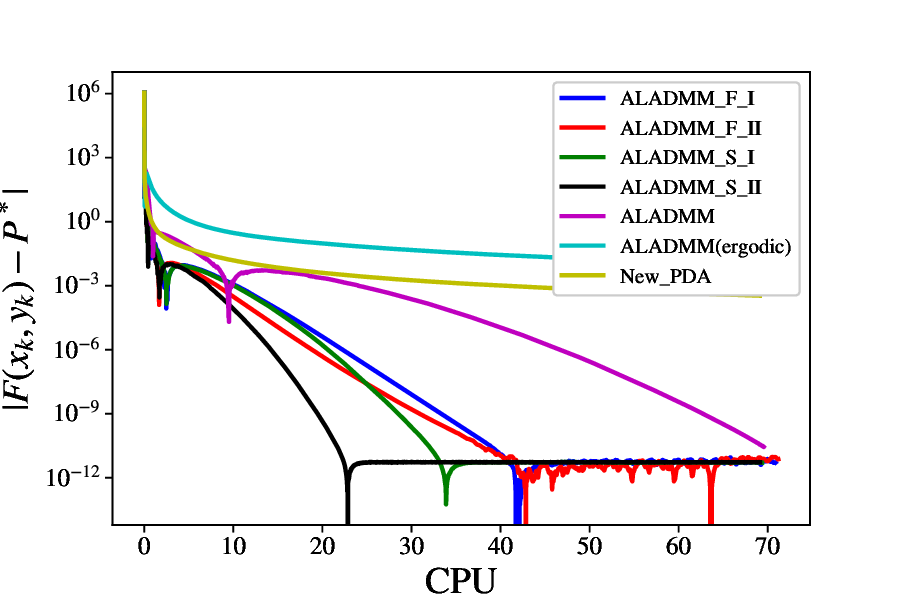}
\centering
\includegraphics[width=3in]{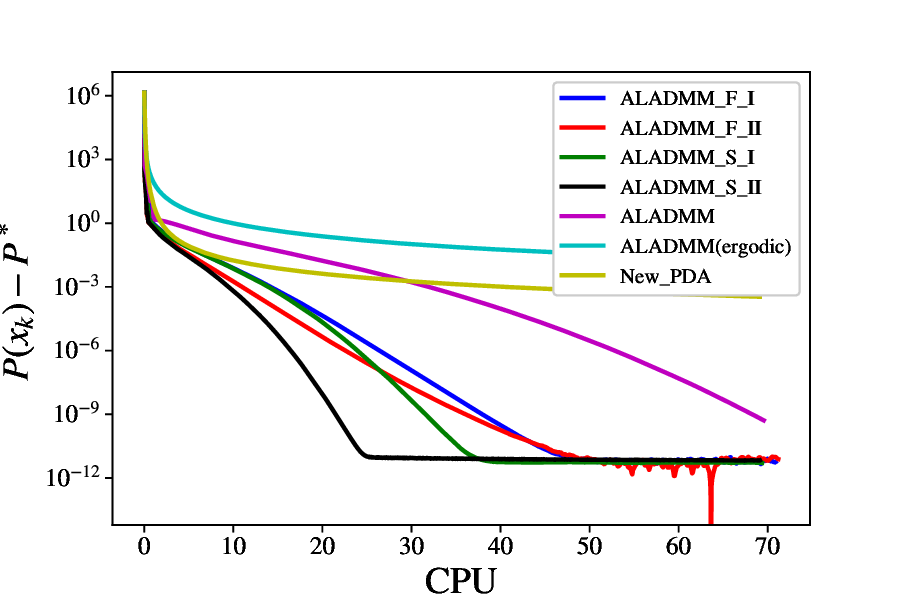}
\end{minipage}%
}
\caption{Numerical results of the proposed algorithms, ALADMM and New-PDA for problem \eqref{ENP}}\label{fig1}
\end{figure}

\subsection{Least absolute deviation regression}
	Consider the least absolute deviation (LAD) regression problem
	\begin{equation*}\label{eq_LAD}
		\min_{x\in\mathbb{R}^n}  \|x\|_1+\frac{\mu}{2}\|x\|^2 + {\eta}\|Mx-b\|_1,
	\end{equation*}
	where $A\in\mathbb{R}^{m\times n}$ and $b\in\mathbb{R}^m$ represent the given data, with $m\ll n$. It's evident that the LAD regression problem can be equivalently expressed as:
\begin{eqnarray}\label{LAD}
	\min_{x,y} &\ F(x,y)= \|y\|_1+\frac{\mu}{2}\|y\|^2+{\eta}\|x-b\|_1,  \nonumber\\
		s.t.& \ x -My =0,
\end{eqnarray}
Let's set $\mu=0.05$, $\eta =1$, $m=500$, $n=5000$. We generate matrix $M$ from a Gaussian distribution. The vector $\bar{x}$ consists of $0.1n$ non-zero elements, randomly distributed within the range $[-2, 2]$. The observed vector $b$ is obtained as $b=M\bar{x}+\omega$, where $\omega \in \mathbb{R}^m$ represents Gaussian noise with variance $\sigma^2=0.01$. We proceed by comparing the performance of our algorithms ALADMM-F-II and ALADMM-S-II (with parameters $\alpha=n, \beta =5, \gamma=\frac{1}{\beta|M|^2}$, and update $t_k$ by \eqref{max_tk}), with ALADMM from \cite{Xu2017} (with parameters $\beta_k=\gamma_k=\frac{\mu(k+1)}{2|M|^2}, P_k=0, Q_k= \frac{\mu(k+1)}{2}(I_n-\frac{M^TM}{|M|^2}$)), and New-PDA from \cite{Tran2018} (with parameters $\gamma=0.999$, $\rho_0^{1+}=\frac{5\Gamma}{2|M|^2}$). The convergence behavior of this test is depicted in Fig \ref{fig2}. Notably, our ALADMM-F-II and ALADMM-S-II algorithms outperform ALADMM and New-PDA, showcasing superior performance in both feasibility violation and objective residual.

\begin{figure}[htbp]
\centering
\subfigure[The number of iterations]{
\begin{minipage}[t]{0.48\linewidth}
\centering
\includegraphics[width=3in]{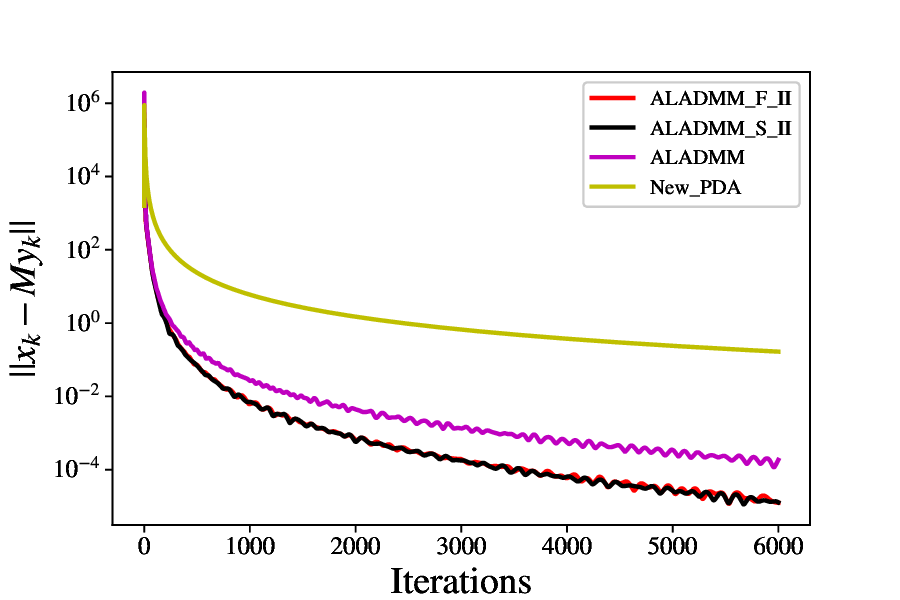}
\centering
\includegraphics[width=3in]{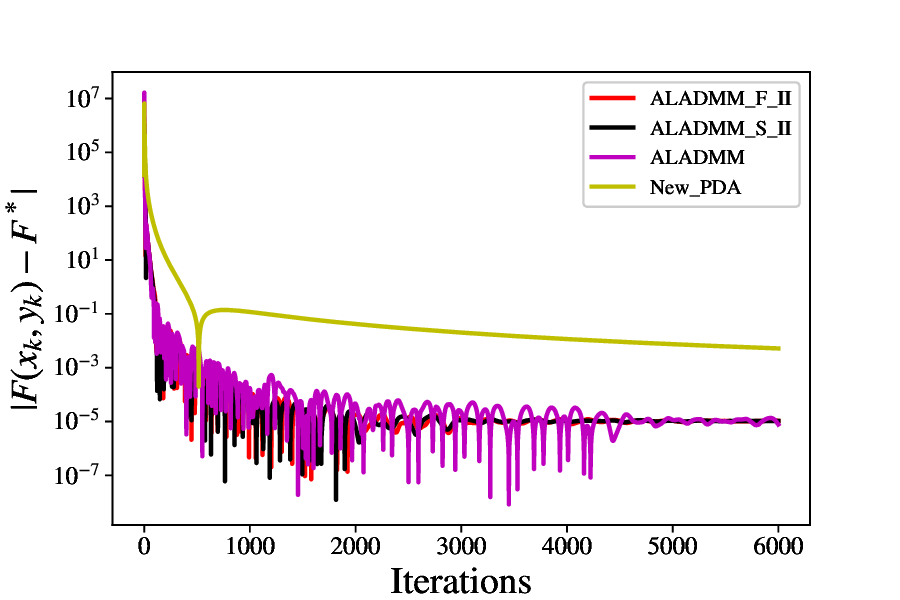}
\end{minipage}%
}%
\subfigure[CPU time in seconds]{
\begin{minipage}[t]{0.48\linewidth}
\centering
\includegraphics[width=3in]{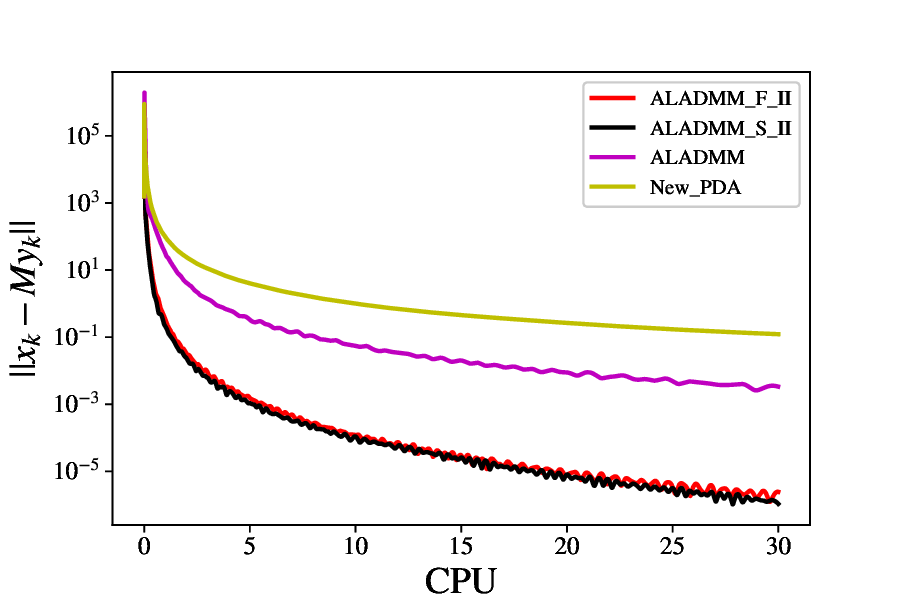}
\centering
\includegraphics[width=3in]{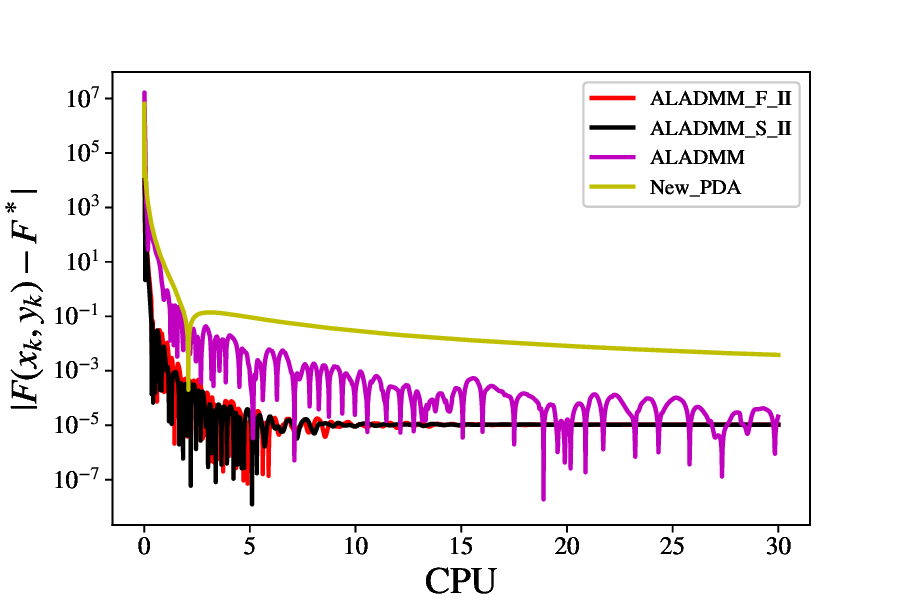}
\end{minipage}%
}
\caption{Numerical results of the proposed algorithms, ALADMM and New-PDA for problem \ref{LAD}}\label{fig2}
\end{figure}

\section{Conclusions}
This paper considers accelerated linearized ADMM methods with Nesterov extrapolation, extending two well-established Nesterov's accelerated schemes from unconstrained composite convex optimization problems to the separable convex optimization problem denoted as \eqref{ques}, which exhibits a composite structure. Assuming semi-strong convexity, the proposed methods achieve non-ergodic convergence rates of $\mathcal{O}(1/t_k^2)$ when $t_{k+1}^2\leq t_k^2 + t_{k+1}$. Selecting a classical choice for $t_k$ allows us to obtain a non-ergodic decay rate of $\mathcal{O}(1/k^2)$. Our proposed accelerated linearized ADMMs can be reduced to accelerated linearized ALMs for solving one-block linearly constrained problem \eqref{ques_one} with a decay rate of $\mathcal{O}(1/k^2)$.  Moreover, we investigate the relationships between our accelerated Linearized ADMM methods and existing accelerated ALMs and accelerated forward-backward methods. We also conduct some numerical experiments to validate the effectiveness  of the proposed algorithms.

\appendix
\section*{Appendix: Technical Lemmas}
\begin{lemma}\cite[Lemma 4]{HeAuto} \label{le_app1}
	Let $\{h_k\}_{k\geq 1}$ be a sequence of vectors in $\mathbb{R}^{n}$ and $\{a_k\}_{k\geq 1}$ be a sequence in $[0,1)$. Assume 
	\[\left\|h_{k+1}+\sum_{i=1}^{k} a_{i} h_{i}\right\|\leq C, \quad\forall k\geq 1.\]
	 Then, 
\[ \sup_{k\geq 1}\|h_k\|\leq \|h_1\|+2C.\]
\end{lemma}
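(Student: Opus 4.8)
The plan is to avoid working with $\|h_k\|$ directly---a naive recursion on the norms only yields linear growth---and instead to track the weighted partial sums $T_k := \sum_{i=1}^{k} a_i h_i$, for which the hypothesis reads $\|h_{k+1}+T_k\|\le C$. Writing $S_k := h_{k+1}+T_k$, so that $\|S_k\|\le C$ for all $k\ge 1$, the key observation I would establish first is that the $T_k$ obey a \emph{convex-combination} recursion. Indeed, from $T_k = T_{k-1}+a_k h_k$ together with the identity $h_k = S_{k-1}-T_{k-1}$ (valid for $k\ge 2$, since $S_{k-1}=h_k+T_{k-1}$), substitution gives
\[
T_k = (1-a_k)T_{k-1}+a_k S_{k-1}, \qquad k\ge 2,
\]
and since $a_k\in[0,1)$ the right-hand side is a genuine convex combination of $T_{k-1}$ and $S_{k-1}$.

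Granting this, the bound on $\{T_k\}$ follows by a one-line induction. Setting $R := \max\{a_1\|h_1\|,\,C\}$, I would check the base case $\|T_1\| = a_1\|h_1\|\le R$, and then, assuming $\|T_{k-1}\|\le R$, use the triangle inequality together with $\|S_{k-1}\|\le C\le R$ to obtain
\[
\|T_k\| \le (1-a_k)\|T_{k-1}\|+a_k\|S_{k-1}\| \le (1-a_k)R+a_k R = R.
\]
Hence $\|T_k\|\le R\le \max\{\|h_1\|,\,C\}$ for every $k\ge 1$, where the last inequality uses $a_1<1$.

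Finally I would recover the conclusion by reading off $h_{k+1}=S_k-T_k$, so that
\[
\|h_{k+1}\| \le \|S_k\|+\|T_k\| \le C+\max\{\|h_1\|,\,C\} \le \|h_1\|+2C,
\]
the last inequality splitting into the two cases $\|h_1\|\ge C$ and $\|h_1\|<C$; combined with the trivial bound $\|h_1\|\le\|h_1\|+2C$ this yields $\sup_{k\ge 1}\|h_k\|\le\|h_1\|+2C$.

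The only genuine obstacle is the very first step. The tempting route is to rearrange the hypothesis as $h_{k+1}=S_k-\sum_{i=1}^{k}a_i h_i$ and bound term by term, but this produces a recursion of the shape $\|h_{k+1}\|\le 2C+(1-a_k)\|h_k\|$ whose contraction factor $1-a_k$ can be arbitrarily close to $1$, so it permits unbounded (linear) growth and proves nothing. Recognizing that it is the \emph{accumulated} quantity $T_k$, not $h_k$ itself, that stays controlled---precisely because the new increment $a_k h_k$ can be rewritten through the bounded quantity $S_{k-1}$ to expose the convex-combination structure---is what makes the argument close, and everything after that reduction is routine.
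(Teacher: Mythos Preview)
Your proof is correct. The convex-combination recursion $T_k=(1-a_k)T_{k-1}+a_kS_{k-1}$ is the right structural observation, and the induction on $\|T_k\|\le R$ with $R=\max\{a_1\|h_1\|,C\}$ closes cleanly; the final case split to pass from $C+\max\{\|h_1\|,C\}$ to $\|h_1\|+2C$ is also fine.

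There is nothing to compare against here: the paper does not supply its own proof of this lemma but simply quotes it from \cite[Lemma~4]{HeAuto}. Your argument thus fills in what the paper leaves as a citation, and it does so with essentially the minimal machinery---tracking the weighted partial sums rather than the individual terms is exactly the device one needs, since (as you note) a direct recursion on $\|h_k\|$ via $\|h_{k+1}\|\le C+\sum_{i=1}^{k}a_i\|h_i\|$ gives no uniform control when the $a_k$ are only assumed to lie in $[0,1)$.
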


\begin{lemma}\label{le_appt1}
	The positive sequence $\{t_k\}_{k\geq 1}$ generated by $t_{k+1} = \frac{1+\sqrt{1+4t_k^2}}{2}$ with $t_1>0$ satisfies $t_{k}\geq t_1+\frac{k-1}{2}$.
\end{lemma}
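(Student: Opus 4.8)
The plan is to show that the recursion forces each increment $t_{k+1}-t_k$ to be at least $\tfrac12$, after which the claimed bound follows by a one-line induction. First I would record the equivalent form of the defining relation. Isolating the radical in $t_{k+1}=\frac{1+\sqrt{1+4t_k^2}}{2}$ gives $2t_{k+1}-1=\sqrt{1+4t_k^2}$, and squaring yields the identity $t_{k+1}^2=t_k^2+t_{k+1}$. This is exactly the growth condition $t_{k+1}^2\le t_k^2+t_{k+1}$ used throughout the paper, here attained with equality, so the present sequence realizes the fastest admissible growth; this remark is optional for the proof but clarifies why this $t_k$ is the natural extremal choice.

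The crux of the argument is a pointwise increment bound, and I expect it to be the only step requiring a moment of thought. Since $t_k>0$, I would simply discard the $+1$ under the radical, using $\sqrt{1+4t_k^2}\ge\sqrt{4t_k^2}=2t_k$, to obtain
\[
t_{k+1}=\frac{1+\sqrt{1+4t_k^2}}{2}\ge\frac{1+2t_k}{2}=t_k+\frac12 .
\]
Hence $t_{k+1}-t_k\ge\tfrac12$ for every $k\ge1$. Note that $t_1>0$ together with this bound also keeps the whole sequence positive, so that every step of the recursion and of the estimate is well defined; this is what lets us apply the bound $\sqrt{4t_k^2}=2t_k$ rather than $2\lvert t_k\rvert$.

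Finally I would conclude by induction on $k$ (equivalently, by telescoping). The base case $k=1$ is the trivial equality $t_1\ge t_1$. Assuming $t_k\ge t_1+\frac{k-1}{2}$, the increment bound gives $t_{k+1}\ge t_k+\frac12\ge t_1+\frac{k-1}{2}+\frac12=t_1+\frac{k}{2}$, which is the assertion for $k+1$. Alternatively, summing the increment bound directly yields $t_k-t_1=\sum_{i=1}^{k-1}(t_{i+1}-t_i)\ge\frac{k-1}{2}$, giving the same conclusion.

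I do not anticipate a genuine obstacle. The one subtlety worth flagging is that it is cleaner to work from the raw inequality $\sqrt{1+4t_k^2}\ge 2t_k$ than from the quadratic identity $t_{k+1}^2=t_k^2+t_{k+1}$: the latter route produces the exact increment formula $t_{k+1}-t_k=\frac{t_{k+1}}{t_k+t_{k+1}}$, which would then require first establishing monotonicity of $\{t_k\}$ in order to bound the right-hand side below by $\tfrac12$. Dropping the $+1$ under the root bypasses that detour entirely.
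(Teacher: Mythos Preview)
Your proof is correct and follows the same overall strategy as the paper: establish the increment bound $t_{k+1}\ge t_k+\tfrac12$ and then telescope. The only difference is in how the increment bound is obtained. The paper derives the quadratic identity $t_{k+1}^2=t_k^2+t_{k+1}$, rewrites it as $t_{k+1}^2-\tfrac{t_{k+1}}{2}=t_k^2+\tfrac{t_{k+1}}{2}\ge t_k^2+\tfrac{t_k}{2}$ (using monotonicity $t_{k+1}\ge t_k$), and then reads off $t_{k+1}\ge t_k+\tfrac12$. Your route, simply dropping the $+1$ under the radical to get $\sqrt{1+4t_k^2}\ge 2t_k$, is more direct and, as you note, avoids the need to first check monotonicity. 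Both arguments are short and elementary; yours is marginally cleaner.
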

\begin{proof}
	We can verify that $t_{k+1}^2= t^2_k + t_{k+1}$. It implies
\[t_{k+1}^2-\frac{t_{k+1}}{2}= t^2_k + \frac{t_{k+1}}{2}\geq  t^2_k + \frac{t_{k}}{2}, \]
and then 
\begin{equation*}
	t_{k+1} \geq t_k+\frac{1}{2}\geq t_1+\frac{k}{2}.
\end{equation*}
\end{proof}

\begin{lemma}\label{le_appt2}
The positive sequence $\{t_k\}_{k\geq 1}$ generated by $t_{k+1} = \sqrt{t_k^2+at_k}$ with $a > 0$ and $t_1> 0$ satisfies $t_{k}\geq t_1+{b(k-1)}$, where $b=\frac{2at_1}{a+4t_1}< \frac{a}{2}$.
\end{lemma}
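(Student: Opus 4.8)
The plan is to show that the increments $t_{k+1}-t_k$ are bounded below by the constant $b$, and then to telescope. First I would record two elementary consequences of the recursion $t_{k+1}^2 = t_k^2 + a t_k$. Since $a,t_k>0$, we have $t_{k+1}^2 > t_k^2$, so the sequence is strictly increasing and in particular $t_k \geq t_1$ for every $k$. Rewriting the recursion as $t_{k+1} = \sqrt{t_k(t_k+a)}$, the AM-GM inequality gives the upper bound
\[ t_{k+1} \leq \frac{t_k + (t_k+a)}{2} = t_k + \frac{a}{2}, \]
which will be used to control the denominator in the increment formula below.

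Next I would express the increment exactly. From $t_{k+1}^2 - t_k^2 = a t_k$ and factoring the difference of squares, $t_{k+1} - t_k = \frac{a t_k}{t_{k+1}+t_k}$. Since a lower bound on the increment requires an upper bound on the denominator, substituting $t_{k+1} \leq t_k + a/2$ yields
\[ t_{k+1} - t_k \;\geq\; \frac{a t_k}{2 t_k + a/2} \;=\; \frac{2 a t_k}{4 t_k + a}. \]
Setting $\phi(t) = \frac{2at}{4t+a}$, a direct differentiation gives $\phi'(t) = \frac{2a^2}{(4t+a)^2} > 0$, so $\phi$ is increasing on $(0,+\infty)$. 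Because $t_k \geq t_1$, monotonicity of $\phi$ then gives $t_{k+1} - t_k \geq \phi(t_k) \geq \phi(t_1) = \frac{2 a t_1}{4 t_1 + a} = b$ for every $k \geq 1$.

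Finally, telescoping this lower bound on the increments over $i=1,\dots,k-1$ gives
\[ t_k = t_1 + \sum_{i=1}^{k-1}(t_{i+1}-t_i) \geq t_1 + b(k-1), \]
which is the desired estimate. The bound $b < a/2$ is immediate, since $\frac{2at_1}{a+4t_1} < \frac{a}{2}$ reduces to $4t_1 < a+4t_1$, i.e.\ $a>0$. The only mildly delicate point is the AM-GM step in the first paragraph: it is precisely what makes the denominator $t_{k+1}+t_k$ controllable, and everything afterward is routine once that bound is in place. This mirrors the reasoning behind Lemma~\ref{le_appt1}, where the analogous identity $t_{k+1}^2 = t_k^2 + t_{k+1}$ was used to extract increments of at least $1/2$.
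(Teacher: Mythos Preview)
Your proof is correct and takes essentially the same approach as the paper: both establish the upper bound $t_{k+1}\le t_k+a/2$, use the monotonicity $t_k\ge t_1$, and combine them to show $t_{k+1}-t_k\ge b$ before telescoping. The only cosmetic difference is that the paper manipulates the inequality $t_{k+1}^2-bt_{k+1}\ge t_k^2+bt_k$ directly (which factors to $t_{k+1}-t_k\ge b$), whereas you write the increment as $at_k/(t_{k+1}+t_k)$ and bound the denominator---algebraically equivalent, and arguably cleaner.
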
	
\begin{proof}
We can verify that
\begin{equation}\label{eq_app_tk}
	t_{k+1}\leq\sqrt{t_k^2+at_k+\frac{a^2}{4}}=  t_k+\frac{a}{2}
\end{equation}
and $t_{k+1}^2= t^2_k + at_{k}$. Since $a>0$ and $b=\frac{2at_1}{a+4t_1}$,  we can verify $a-2b>0 $ and $t_k\geq t_1$ for all $k\geq 1$. It follows that
\begin{eqnarray*}
	t_{k+1}^2-{bt_{k+1}} &= & t^2_k + {bt_{k}}+{(a-b)t_k} - {bt_{k+1}}\\
	&\overset{\eqref{eq_app_tk}}{\geq}&t^2_k + {bt_{k}}+{(a-b)t_k} - {b}\left(t_{k}+\frac{a}{2}\right)\\
	&= &t^2_k + {bt_{k}}+\left((a-2b)t_k-\frac{ab}{2}\right)\\
	&\geq &t^2_k + \frac{bt_{k}}{2}+(a-2b)t_1-\frac{ab}{2}\\
	&= &t^2_k + {bt_{k}},
\end{eqnarray*}
which yields $t_{k+1}\geq t_k+{b}\geq t_1+b(k-1)$.
\end{proof}

%\begin{acknowledgements}
%If you'd like to thank anyone, place your comments here
%and remove the percent signs.
%\end{acknowledgements}

% Authors must disclose all relationships or interests that 
% could have direct or potential influence or impart bias on 
% the work: 
%
% \section*{Conflict of interest}
%
% The authors declare that they have no conflict of interest.

\section*{Data Availability}
Data will be made available on reasonable request.

\section*{Conflict of Interest} The authors have no relevant financial or non-financial interests to disclose, and have no
conflict of interest to declare that are relevant to the content of this article.

% BibTeX users please use one of
%\bibliographystyle{spbasic}      % basic style, author-year citations
%\bibliographystyle{spmpsci}      % mathematics and physical sciences
%\bibliographystyle{spphys}       % APS-like style for physics
%\bibliography{}   % name your BibTeX data base

\begin{thebibliography}{}

    \bibitem{Adona2020} Adona, V.A., Goncalves, M.L.,  Melo, J.G.: An inexact proximal generalized alternating  direction method of multipliers. Comput. Optim. Appl. {\bf 76}, 621-647  (2020)
		
		
	\bibitem{Attouch2018} Attouch, H., Cabot, A.: Convergence rates of inertial forward-backward algorithms. {SIAM J. Optim.} {\bf 28}(1), 849-874 (2018) 	

   	\bibitem{Attouch2018MP} Attouch, H., Chbani, Z., Peypouquet, J., Redont, P.:  Fast convergence of inertial dynamics and algorithms with asymptotic vanishing viscosity. Math. Program. {\bf 168}(1) 123-175  (2018) 
		
		
	 \bibitem{Bai2022} Bai, J., Hager,  W.W., Zhang, H.: An inexact accelerated stochastic ADMM for separable convex optimization. Comput. Optim. Appl. {\bf 81}, 479-518 (2022)  
	  
	  \bibitem{BeckIma} Beck, A., Teboulle, M.: {A fast iterative shrinkage-thresholding algorithm for linear inverse problems}. {SIAM J. Imaging Sci.} {\bf 2}(1), 183--202  (2009) 
	  
	\bibitem{BotMP2022} Bo{\c t}, R.I.,  Csetnek, E.R., Nguyen,  D.K.: Fast augmented Lagrangian method in the convex regime with convergence guarantees for the iterates. {Math. Prog.} {\bf 200}(1), 147-197 (2023)
	
	\bibitem{Boyd2011} Boyd, S., Parikh, N., Chu, E., Peleato, B., Eckstein, J.: Distributed optimization and statistical learning via the alternating direction method of multipliers. Found. Trends Mach. Learn. {\bf 3}(1), 1-122 (2011)
	
	\bibitem{Chambolle} Chambolle, A., Dossal, C.: On the convergence of the iterates of the ``Fast Iterative Shrinkage/Thresholding Algorithm''. J. Optim. Theory Appl. {\bf 166}(3), 968-982 (2016)		 
	
			
    \bibitem{Chen2015} Chen, C.,  Chan, R.H.,  Ma, S.,  Yang, J.: Inertial proximal ADMM for linearly constrained separable convex optimization. {SIAM J. Imaging Sci.} {\bf 8}(4) 2239-2267 (2015) 


	\bibitem{Chen2022} Chen, H.,  Gu, G.,  Yang, J.: A golden ratio proximal alternating direction method of multipliers for separable convex optimization, J. Glob. Optim., (2022) \href{https://doi.org/10.1007/s10898-022-01154-y}{Doi: 10.1007/s10898-022-01154-y}.

   \bibitem{Davis2017} Davis, D., Yin, W.: Faster convergence rates of relaxed Peaceman-Rachford and ADMM under regularity assumptions. Math. Oper. Res. {\bf 42}(3),  783-805 (2017)
   
    \bibitem{Gabay1976} Gabay, D., Mercier, B.: A dual algorithm for the solution of nonlinear variational problems via finite element approximations. Comput. Math. {\bf 2}(1),  17-40 (1976)
    
    \bibitem{Goldfarb}  Goldfarb, D., Ma,  S., Scheinberg, K.: Fast alternating linearization methods for minimizing the sum of two convex functions. Math. Program. {\bf 141}(1-2), 349-382 (2013) 
    
    \bibitem{Goldstein2014} Goldstein, T., O'Donoghue, B., Setzer, S., Baraniuk, R.: Fast alternating direction optimization methods SIAM J. Imaging Sci. {\bf 7}(3), 1588-1623 (2014)

    \bibitem{Hepr2014}  He, B., Liu, H.,  Wang, Z.,  Yuan, X.: A strictly contractive peaceman--rachford splitting method for convex programming. SIAM J. Optim. {\bf 24}(3), 1011-1140 (2014)
    
	\bibitem{HeY2010} He, B., Yuan, X.:  {On the acceleration of augmented Lagrangian method for linearly constrained optimization}. {Optimization online}, 2010. \url{http://www.optimization-online.org/DB_FILE/2010/10/2760.pdf}.
        
     \bibitem{HeY2012Siam} He, B., Yuan, X.: On the $O(1/n)$ convergence rate of the Douglas-Rachford alternating direction method. SIAM J. Numer. Anal. {\bf 50}(2), 700-709 (2012)    
    
    \bibitem{HeY2015NM}  He, B., Yuan, X.:  On non-ergodic convergence rate of Douglas-Rachford alternating direction method of multipliers. Numer. Math. {\bf 130}(3), 567-577  (2015)
        
     \bibitem{HeSiam} He, X., Hu, R.,  Fang, Y.P.: Convergence rates of inertial primal-dual dynamical methods for separable convex optimization problems. SIAM J. Control Optim. {\bf 59}(5), 3278--3301 (2021) 

	\bibitem{HeAuto} He, X., Hu, R.,  Fang, Y.P.: {Fast primal-dual algorithm via dynamical system for a linearly constrained convex optimization problem}. \emph{Automatica} {\bf 146}, 110547  (2022) 
    
    \bibitem{HeNA} He, X., Hu, R.,  Fang, Y.P.: {Inertial accelerated primal-dual methods for linear equality constrained convex optimization problems}. {Numer. Algorithms} {\bf 90},  1669--1690 (2022)
    
    \bibitem{HeHess}  He, X., Tian, F., Li, A.Q., Fang, Y.P.: Convergence rates of mixed primal-dual dynamical systems with Hessian driven damping. Optimization, (2023) \href{https://doi.org/10.1080/02331934.2023.2253813}{Doi: 10.1080/02331934.2023.2253813}.

	\bibitem{Huang2013}  Huang, B.,  Ma, S.,  Goldfarb, D.: Accelerated linearized Bregman method. J. Sci. Comput. {\bf 54}(2),  428-453 (2014)

        
     \bibitem{Kang2015} Kang, M.,  Kang, M., Jung M,: Inexact accelerated augmented Lagrangian methods. Comput. Optim. Appl. {\bf 62}(2), 373-404 (2015)

		
	  \bibitem{Kang2013} Kang, M., Yun, S.,  Woo, H., Kang, M.: Accelerated Bregman method for linearly constrained $\ell _1-\ell _2 $ minimization. J. Sci. Comput. {\bf 56}(3), 515-534 (2013)
	  
	     
    \bibitem{Li2019}  Li, H., Lin, Z.:  Accelerated alternating direction method of multipliers: An optimal $\mathcal{O}(1/K)$ nonergodic analysis, J. Sci. Comput. {\bf 79}(2), 671-699 (2019)
	 
	 \bibitem{LiML2019ACC} Li, H., Lin, Z., Fang, C.: {Accelerated Optimization for Machine Learning}, Springer, Singapore (2020)
	
	\bibitem{LiML2019ADMM} Li, H., Lin, Z., Fang, C.: {Alternating Direction Method of Multipliers for Machine Learning}, Springer, Singapore (2022)	
	
	 \bibitem{Lin2023} Lin,  Q., Xu, Y.: Reducing the complexity of two classes of optimization problems by inexact accelerated proximal gradient method. {SIAM J. Optim.} {\bf 33}(1) 1-35 (2023) 
	     
	\bibitem{Luo2021Accer}  Luo, H.: {Accelerated primal-dual methods for linearly constrained convex optimization problems}. (2021) \href{https://arxiv.org/abs/2109.12604}{arXiv:2109.12604}
	
	\bibitem{LuoSco} Luo, H., Zhang. Z.: A unified differential equation solver approach for separable convex optimization: splitting, acceleration and nonergodic rate. (2022) \href{https://arxiv.org/abs/2109.13467}{arXiv:2109.13467}. 
	
 
	\bibitem{LuoEsiam} Luo, H.:  {A primal-dual flow for affine constrained convex optimization}, {ESAIM Control Optim. Calc. Var.} {\bf 28}, 33 (2022)
		

	 \bibitem{LuoMP}  Luo, H., Chen, L.: From differential equation solvers to accelerated first-order methods for convex optimization, Math. Program. {\bf 195}(1-2),  735--781 (2022)
	 
	 \bibitem{Nesterov1983} Nesterov,  Y.: {A method of solving a convex programming problem with convergence rate $\mathcal{O}(1/k^2)$}, {InSov. Math. Dokl.} {\bf 27}(2)  372--376 (1983)
	
    \bibitem{Nesterov2013} Nesterov,  Y.:  Gradient methods for minimizing composite functions, Math. Program. {\bf 140}, 125-161 (2013) 
	 
	  \bibitem{Nesterov2018} Nesterov,  Y.:  Lectures on Convex Optimization, Springer, Berlin (2018)
    
     \bibitem{Ouyang} Ouyang, Y.,  Chen, Y., Lan, G., Pasiliao,  Jr E.: An accelerated linearized alternating direction method of multipliers. {SIAM J. Imaging Sci.} {\bf 8}(1), 644-681 (2015) 
	
	\bibitem{Patrascu2017} Patrascu, A., Necoara, I., Tran-Dinh, Q.: Adaptive inexact fast augmented Lagrangian methods for constrained convex optimization. Optim. Lett. {\bf 11}, 609-625  (2017)
	
	\bibitem{Sabach2022}  Sabach, S.,  Teboulle, M.: Faster Lagrangian-based methods in convex optimization. SIAM J. Optim. {\bf 32}(1), 204-227 (2022) 
	
	 \bibitem{Su2014}  Su, W.,  Boyd, S., Candes, E.J.: A differential equation for modeling Nesterov's accelerated gradient method: theory and insights. J. Mach. Learn. Res. {\bf 17}(1), 5312-5354 (2016)
	 
	  \bibitem{Tao2017} Tao, M., Yuan, X.: Accelerated Uzawa methods for convex optimization. Math. Comput. {\bf 86}(306), 1821-1845 (2017)
	  
	  \bibitem{Tian2019} Tian,  W.,  Yuan, X.: An alternating direction method of multipliers with a worst-case $O(1/n^2)$ convergence rate. Math. Comput. {\bf 88}(318), 1685-1713 (2019)
	  
	 \bibitem{Tran2019}  Tran-Dinh, Q.: Proximal alternating penalty algorithms for nonsmooth constrained convex optimization. Comput. Optim. Appl. {\bf 72}, 1-43  (2019)
	 
	     \bibitem{Tran2018}  Tran-Dinh, Q.,  Fercoq, O.,  Cevher, V.: A smooth primal--dual optimization framework for nonsmooth composite convex minimization. {SIAM J. Optim.} {\bf 28}(1), 96--134 (2018) 
          
      \bibitem{Tran2020} Tran-Dinh, Q.,  Zhu, Y.: Non-stationary first-Order primal-Dual algorithms with fast convergence rates. SIAM J. Optim. {\bf 30}(4), 2866-2896  (2020) 
      
      
      
    \bibitem{Tseng2008}  Tseng, P.: On accelerated proximal gradient methods for convex-concave optimization. Technical report, University of Washington, Seattle (2008)
    
    \bibitem{Xu2017} Xu, Y.: {Accelerated first-order primal-dual proximal methods for linearly constrained composite convex programming}. {SIAM J. Optim.} {\bf 27}(3),  1459--1484 (2017) 
    	
     \bibitem{Xu2021MP}  Xu, Y.: Iteration complexity of inexact augmented Lagrangian methods for constrained convex programming. {Math. Program.} {\bf185}, 199-244 (2021)
     
      \bibitem{Yurtsever2015}  Yurtsever, A.,  Tran-Dinh, Q., Cevher,  V.: A universal primal-dual convex optimization framework. Adv. Neural Inf. Process. 28  (2015)
        

	\bibitem{Zhang2023} Zhang,  T.,  Xia, Y.,  Li, S.: Lagrangian-based methods in convex optimization: prediction-correction frameworks with non-ergodic convergence rates. (2023) \href{https://arxiv.org/abs/2304.02459}{arXiv:2304.02459}. 
    
	
	\bibitem{Zhang2022}  Zhang,  T.,  Xia, Y.,  Li, S.:  Faster Lagrangian-based methods: a unified prediction-correction framework. (2022) \href{https://arxiv.org/abs/2206.05088}{arXiv:2206.05088}.


    \bibitem{Zhu2023} Zhu, Z., Chen, F., Zhang, J., Wen, Z.: A unified primal-dual algorithm framework for inequality constrained problems. J. Sci. Comput. {\bf 97}(2),  1-39 (2023)
	
\end{thebibliography}

% Non-BibTeX users please use
%\begin{thebibliography}{}
%
% and use \bibitem to create references. Consult the Instructions
% for authors for reference list style.
%
%\bibitem{RefJ}
%% Format for Journal Reference
%Author, Article title, Journal, Volume, page numbers (year)
%% Format for books
%\bibitem{RefB}
%Author, Book title, page numbers. Publisher, place (year)
%% etc
%\end{thebibliography}

\end{document}